\newtheorem{theorem}{Theorem}[section]
\newtheorem{lemma}[theorem]{Lemma}
\newtheorem{corollary}[theorem]{Corollary}
\newtheorem{proposition}[theorem]{Proposition}
\theoremstyle{definition}
\newtheorem{definition}[theorem]{Definition}
\newtheorem{remark}[theorem]{Remark}
\newtheorem{subsct}[theorem]{}
\theoremstyle{plain}
\newcommand{\extp}{\@ifnextchar^\@extp{\@extp^{\,}}}
\def\@extp^#1{\mathop{\bigwedge\nolimits^{\!#1}}}
\font\smallrm=cmr7
\author[
\smallrm{E.~Esteves, N.~Medeiros, W.~Sousa}
]
{Eduardo Esteves, Nivaldo Medeiros \and W\'{a}llace Sousa}
\title[Limits of dual curves]{Limits of dual curves via foliations}
\thanks{First author supported by CNPq, Proc.~304623/2015-6 and
    FAPERJ, Proc.~E-26/202.992/2017. Third author partially supported
    by CAPES, Finance Code 001.}
\keywords{Dual curves, foliations, ramification points}
\begin{document}

\begin{abstract}
We develop a method to compute limits of
dual plane curves in Zeuthen families of any kind.
More precisely, we compute the limit
$0$-cycle of the ramification scheme of a general linear system on the
generic fiber, only assumed geometrically reduced, of a Zeuthen family of any kind.
\end{abstract}

\maketitle

\section{Introduction}\setcounter{equation}{0}

\begin{subsct}
{\bf The problem.} Let $C(t)$ be a family of
projective plane curves degenerating to $C:=C(0)$. More precisely, 
consider the one-parameter family $C(t)$ of plane curves given by a
convergent \emph{homogeneous
power series}
$$
F(t):= F_0 + F_1t + F_2t^2 + \cdots + F_it^i + \cdots,
$$
with $F_i \in \mathbb{C}[X_0,X_1,X_2]$ homogeneous of the same
degree and $F_0 \neq 0$. Suppose that for $t \in
\mathbb{C}$ near 0 the plane curve $C(t)$ is nonsingular. We may ask
which plane curve the dual curve to $C(t)$ degenerates to as $t$
approaches $0$. In other words, what is the limit of the dual plane curves of the family?

The present article addresses this question, which is not new. Indeed,
the history of the problem goes back to at least the 19th Century,
surfacing in works by Maillard \cite{M} and Zeuthen \cite{Z1},
\cite{Z2}. They worked on computing  limits of dual curves for
certain one-parameter families of plane curves $C(t)$ as a step in the
determination of characteristic numbers of plane curves.

Characteristic numbers are basic
enumerative invariants. They answer the question: How many smooth
plane curves of a given degree $d$ pass through $a$ general points and
are tangent to $b$ general lines, for $a+b=d(d+3)/2$? For $d=2$ the
numbers 
are classical, obtained 
through the moduli of complete conics, a blowup of projective 5-space
along the Veronese surface; see \cite{Kl2} for a history.

Zeuthen predicted those numbers for $d=3, 4$. For this Zeuthen used
certain families $C(t)$ which he called of ``first kind'', ``second
kind'' and ``third kind''  (see Remark~\ref{5.1.4}), observing that
for them the limit of the dual curves depends only on the first few terms of
the power series expansion of $F(t)$. 

More recently, van Gastel \cite{vG} computed limits of conormals of plane
curves, following the theory on the conormal scheme developed by
Kleiman \cite{Kl3}, with the same purpose as Maillard and Zeuthen, to compute
characteristic numbers. Also, Katz \cite{Ka} computed limits of dual
curves by using Newton polygons, for families satisfying certain
\textit{regularity} conditions. 

For $d=3$ the characteristic numbers were rigorously computed
by Aluffi \cite{A1} and Kleiman and Speiser \cite{KlS}. And for $d=4$
most of them were computed by Aluffi \cite{A2} and van Gastel \cite{vG},
and the remaining by Vakil \cite{V} using Kontsevich's moduli space of
stable maps. For $d=5$ and above most characteristic numbers are not computed.

The present paper defines Zeuthen families of type $n$ for every
natural number $n$ (following van Gastel's definition, which is
different from Zeuthen's 
for $n = 4$) and introduces a new approach to computing limits of dual curves, and more generally limits of ramification points. With our method we are able to compute for instance limits of dual curves for Zeuthen families of the ``third kind'', which are not \textit{regular} in
general, in Katz's terminology. More generally, we consider families
of curves $C(t)$ given by homogeneous power series of the form
$$
F(t)=E^2A+F_1t+F_2t^2+\cdots,
$$
where $E$ and $A$ are square-free and coprime. If $C(t)$ is
generically reduced, we give a formula for the limit of the dual
curves of the family $C(t)$, our Corollary~\ref{5.1.6}.

Our argument is algebraic. We may replace
$\mathbb C$ by any algebraically closed field of characteristic 0. 

We do not compute characteristic numbers. It would be a natural
endeavor to apply the techniques developed here to compute new
characteristic numbers, but we suspect that the work ahead is still substantial.
\end{subsct}

\begin{subsct}\setcounter{equation}{0}
{\bf The method.} Let $C$ be a projective plane curve defined over an
algebraically closed field $k$ of characteristic $p\geq 0$. If $C$ is
smooth, to describe its dual curve we may consider the \textit{ramification
  schemes} $R_C(V)$ associated to linear systems $V$ on $C$. 
More precisely, for each $P\in C$ and each nonnegative integer
$\ell$, let 
$V(-\ell P) \subseteq V$ be the linear subsystem of sections of $V$
vanishing at $P$ with multiplicity at least $\ell$. We call $P$ a 
\textit{ramification point} of $V$ if $V(-(\dim V)P) \not = 0$. 
We can see the set of ramification points of $V$ as a subscheme of
$C$. In fact, this scheme can be computed locally as the locus cut out
on $C$ by a ``Wronskian'' curve, even if $C$ is singular; see 
Subsection~\ref{ramifsch}. 
It is this scheme that we denote by $R_C(V)$.

Given a general point $R \in \mathbb{P}^2_k$, consider the
ramification subscheme of $C$ associated to the linear
system $V_R$ cut out by the lines passing through $R$, so that a
simple point $P$ of $C$ is in the
support of the subscheme if and only if the line $\overline{RP}$ is
tangent to $C$ at $P$. If $C$ is smooth then the subscheme is a
Cartier divisor and the dual curve $C^\vee$ satisfies:
\begin{equation}\label{polar}
C^\vee \cap R^\vee = \sum_P n_P \overline{RP}^\vee,
\end{equation}
where $n_P$ is the multiplicity of $P$ in the ramification divisor.
This implies that the dual curve of a smooth plane curve is completely 
determined by ramification schemes. So we may, in principle, compute limits of dual curves by
computing limits of ramification schemes.

If $C$ is singular, but without multiple components, its dual curve is usually defined
using only its smooth locus, as the closure in the dual plane of the locus of tangent
lines to simple points of $C$. It is however better, for our purposes,
to adopt a different definition, that in \cite{EH}, p.~240. 
With that definition, the dual curve $C^\vee$ is made
up of the usual dual curves of the components of $C$, each with
multiplicity 1, and the lines dual to the singular
points $P\in C$, each with multiplicity $n_P$ equal to the intersection
multiplicity of a general polar with $C$ at $P$. Thus, if $C=C(0)$ for
a family $C(t)$, then $C^\vee$ is the limit of the dual curves
of this family. Also, Equation~\eqref{polar} holds!

If $C$ has multiple components, and $C=C(0)$ for a family $C(t)$, the
limit of the dual curves depends on the family $C(t)$. To compute the
limit, we compute the limit of the ramification divisors of the linear
systems cut out on the family by the lines passing through a general
point $R \in \mathbb{P}^2_k$. (In other words, we describe
the limit of the dual curves by describing its intersection with
a general line in the dual plane.) And to compute limits of
ramification divisors we resort to foliations.

A (singular) foliation of the projective plane is a rank-1 subsheaf of
the tangent bundle $T_{\mathbb{P}^2_k}$. In more concrete
terms, a foliation is associated to a homogeneous derivation of
$k[X_0,X_1,X_2]$, that is, a derivation
$$
\partial=G_0\partial_{X_0} + G_1\partial_{X_1} + G_2\partial_{X_2},
$$
where $G_0,G_1,G_2 \in k[X_0,X_1,X_2]$ are homogeneous of the same
degree. If $C$ is given by $F=0$, we say the foliation leaves $C$ invariant, 
or that $\partial$ is a $F$-derivation, if
$F|\partial(F)$. That is the case for instance
if $\partial=\partial_{F,H}$ for any homogeneous polynomial $H$; see
Subsection~\ref{wronskians}. 

If $V$ is a linear system on $\mathbb{P}^2_k$ 
given by homogeneous polynomials in
$k[X_0,X_1,X_2]$ of the same degree, we can
use a $F$-derivation $\partial$ to compute ramification. More
precisely, we can consider the so-called \emph{extatic} curve $W_\partial(V)=0$ of
the foliation, defined in \cite{P}. The polynomial $W_\partial(V)$ is the Wronskian
determinant of a basis of $V$ with respect to $\partial$; see
Subsection~\ref{wronskians}.  If the ramification scheme, $R_C(V)$, of the
linear system cut out on $C$ by $V$ is a Cartier divisor and
$\partial=\partial_{F,H}$ for $H$ prime to $F$, then our 
Lemma~\ref{2.3.2} implies that $W_\partial(V)=0$ cuts out on $C$ the divisor
$R_C(V)$ plus $\binom{r+1}{2}$ times the divisor cut out by $H=0$.

Our lemma is the main ingredient of our first application of our
method, Proposition~\ref{3.3.2}, as we explain now.

If $C=C(0)$ for a family of curves $C(t)$ given by a homogeneous power
series $F(t)=\sum F_it^i$, we want to consider a family of foliations given by a
family of derivations 
$$
\partial(t)=G_0(t)\partial_{X_0} + G_1(t)\partial_{X_1} + G_2(t)\partial_{X_2},
$$
where the $G_i(t)$ are homogeneous power series of the same degree. We
want to choose a $\partial(t)$ that is a \emph{$F(t)$-derivation},
that is, $F(t)|\partial(t)(F(t))$. For instance,
$\partial(t):=\partial_{F(t),H}$ for $H$ prime to $F$ and
$\partial'(t):=(1/t)\partial_{F_0,F(t)-F_0}$ are $F(t)$-derivations. 
If $C$ has multiple components,
$\partial'(t)$ is a multiple of those components. Factoring them out,
we get a derivation $\partial''(t)$. If $F_1$ is prime to $F_0$, then
$\partial''(0)$ does not vanish on any component of $C$. 
We say $\partial''(t)$ is a \emph{reduction} of $\partial'(t)$; see
Subsection~\ref{3.3}. 

If $V$ is a general linear system on
$\mathbb{P}^2_k$, 
in the sense that all the poynomials defining
it are prime to $F_0$, then the extatic curve $W_{\partial''(0)}(V)=0$
intersects $C(0)$, cutting out a Cartier divisor. We can thus use the family of extatic
curves given by $W_{\partial''(t)}(V)$
to compute the limit of the
ramification divisors associated to $V$ in the family; a formula is
given in Proposition~\ref{3.3.2}. 

Unfortunately though, the condition on $F(t)$ above is too strict. To
be able to compute limits of dual curves in Zeuthen families, we need
more flexibility. For a component of $C=C(0)$ that is not
multiple, $\partial(0)$ does not vanish on that component. It does
vanish on the multiple components of $C$, but one might not be able to
reduce $\partial(t)$ as we were able to reduce $\partial'(t)$. On the
other hand, $\partial''(0)$ vanishes on the common components of
$C$ and $F_1=0$, but we might not be able to reduce
$\partial''(t)$. The flexibility we want is that of choosing for each
component of $C$ a
family of derivations \emph{adapted} to it, work
independently with each family, and compute the limit on each
component of $C$ of the
ramification divisors of $V$ on $C(t)$, by computing the
limit on that component of the intersection of the family $C(t)$ with 
the associated family of extatic curves. 

We develop these ideas in  Section~\ref{section:adaptions}, whose main
result, Theorem~\ref{4.1.4}, relies heavily on a general formula for limits of
Cartier divisors appearing in \cite{E2}. It is this formula that
allows us to put together the limits computed on each component of
$C$ to obtain a global limit, if certain conditions are satisfied.

In Section~\ref{example} we apply Theorem~\ref{4.1.4} 
to compute limits of ramification divisors
for families $C(t)$ that do not satisfy the conditions for
Proposition~\ref{3.3.2}, 
but that include all Zeuthen families of the first
kind. These families are given by homogeneous power series
$F(t)=F_0+F_1t+\cdots$ such that the common factors of $F_0$ and $F_1$
are simple factors of $F_0$. For these families, $\partial(t)$ and $\partial''(t)$ are the
families of derivations needed. Applying Theorem~\ref{4.1.4} we get a
formula for the limit of the ramification divisors of families of general linear
systems along $C(t)$, our Theorem~\ref{5}, generalizing Proposition~\ref{3.3.2}. As a corollary,
we give a formula for the limit of the dual curves of these families; 
see Corollary~\ref{5cor}. 

Finally, we show that we can also apply Theorem~\ref{4.1.4} to compute
limits of ramification divisors of general linear systems 
for Zeuthen families of any kind, our Theorem~\ref{5.1.5}, and as a
corollary we get formulas for the limits of dual curves. 
Here we potentially need more than
two families of derivations.

Many interesting questions arise from our study. First, when can one
apply Theorem \ref{4.1.4} to compute limits of ramification divisors?
When are there families of $F(t)$-derivations adapted to each
component of $C$ and satisfying the conditions stipulated in the
theorem? Second, how to handle nongeneral linear systems, for instance
the system of all lines when $C$ contains one? In this case, the
question is: what are the limits of inflection points along $C(t)$?
\end{subsct}

\begin{subsct}\setcounter{equation}{0}
{\bf Outline.} 
The paper is organized as follows. In Section~\ref{wrs}
we show how to compute ramification schemes of linear 
systems on curves $C$ using Wronskians induced by foliations.
In Section~\ref{families} we extend the construction to families $C(t)$
and give in Section~\ref{example0} 
a formula for the limit of the ramification schemes of families
of general linear systems along $C(t)$, provided
$C(t)$ degenerates to $C(0)$ along a general direction.
In Section~\ref{section:adaptions} we show how limits of ramification
schemes can be computed when one can find adapted families of
derivations, even for nongeneral degenerations; we specify conditions
and give a formula in Theorem~\ref{4.1.4}. In Section~\ref{example} we
use the method of adaptation to generalize the formula we obtained in
Section~\ref{example0}. Finally, in
Section~\ref{section:zeuthen} 
we compute
limits of dual curves for Zeuthen families of any kind, our Corollary~\ref{5.1.6}.
\end{subsct}

\section{Wronskians and ramification schemes}\label{wrs}

\begin{subsct}\label{wronskians}\setcounter{equation}{0}
{\bf Wronskians.} Let $k$ be a ring and $S$ a $k$-algebra. Let $\partial$ be a $k$-derivation of $S$, and $v:=[a_0 \,\, \cdots \,\, a_r]$ a row matrix of elements $a_i \in S$. We say the determinant$$W_{\partial}(v):=\det\begin{bmatrix}
a_0 & a_1 & \cdots & a_r \\
\partial(a_0) & \partial(a_1) & \cdots & \partial(a_r)\\
\vdots & \vdots & \ddots & \vdots\\
\partial^r(a_0) & \partial^r(a_1) & \cdots & \partial^r(a_r)
\end{bmatrix},$$
where $\partial^i$ denotes the $i$-th iteration of $\partial$, is the \textit{Wronskian} of $v$ with respect to $\partial$.

The multilinearity of the determinant and the Leibniz rule of
derivations yield the following properties of the Wronskian:
\begin{enumerate}
	\item[(1)] $W_{c\partial}(v)=c^{\binom {r+1} {2}}W_{\partial}(v)$ for each $c \in S$.
	\item[(2)] $W_{\partial}(vM)=(\det M)W_{\partial}(v)$ for each square matrix $M$ of size $r+1$ and entries in $k$.
\end{enumerate}

If $V\subseteq S$ is a free $k$-module of finite rank, denote
$W_\partial(V):=W_\partial(v)$, 
where $v:=[a_0 \,\, \cdots \,\, a_r]$, for $a_0,\dotsc,a_r \in S$ a ordered $k$-basis. Property $(2)$ above yields that $W_\partial(V)$ is well defined modulo multiplication by an invertible element of $k$.

Let $S:=k[X_0,X_1,X_2]$.
For each integer $d \geq 0$, let $S_d \subseteq S$ denote the free $k$-submodule of homogeneous polynomials of degree $d$, including $0$. A $k$-submodule $V \subseteq S$ is said to be \textit{homogeneous} of degree $d$ if $V\subseteq S_d$.

Let $\partial_{X_0},\partial_{X_1},\partial_{X_2}$ be the partial $k$-derivations of $S$ with respect to the variables $X_0,X_1,X_2$. A $k$-derivation $\partial$ of $S$ can be expressed in the form
$$
\partial = G_0\partial_{X_0} + G_1\partial_{X_1} + G_2\partial_{X_2},
$$
where $G_0,G_1,G_2 \in S$. We say that $\partial$ is \textit{homogeneous of degree} $d$ if $G_0,G_1,G_2$ are homogeneous of degree $d$.

Given $P \in S$, let
$$
\nabla(P):= \begin{bmatrix} \partial_{X_0}(P) & \partial_{X_1}(P) & \partial_{X_2}(P) \end{bmatrix}.
$$
If $Q \in S$ is another polynomial, let
$$
\partial_{P,Q}:= \det \begin{bmatrix}
\nabla(P) \\
\nabla(Q) \\
\nabla
\end{bmatrix}
:=\det \begin{bmatrix}
\partial_{X_0}(P) & \partial_{X_1}(P) & \partial_{X_2}(P) \\
\partial_{X_0}(Q) & \partial_{X_1}(Q) & \partial_{X_2}(Q) \\
\partial_{X_0} & \partial_{X_1} & \partial_{X_2}
\end{bmatrix} 
$$
$$
:=\begin{vmatrix}  \partial_{X_1}(P) & \partial_{X_2}(P) \\  \partial_{X_1}(Q) & \partial_{X_2}(Q) \end{vmatrix} \partial_{X_0} - \begin{vmatrix}  \partial_{X_0}(P) & \partial_{X_2}(P) \\  \partial_{X_0}(Q) & \partial_{X_2}(Q) \end{vmatrix} \partial_{X_1} + \begin{vmatrix}  \partial_{X_0}(P) & \partial_{X_1}(P) \\  \partial_{X_0}(Q) & \partial_{X_1}(Q) \end{vmatrix} \partial_{X_2}.
$$

Assume $k$ is a field. If $\partial:= G_0\partial_{X_0} + G_1\partial_{X_1} + G_2\partial_{X_2}$ is a homogeneous derivation of $S$ of degree $d$, then $\partial$ induces a section of $T_{\mathbb{P}^2_k}(d-1)$, or equivalently, a map
\begin{equation}\label{2.1.1}
\eta\colon \mathcal{O}_{\mathbb{P}^2_k}(1-d) \rightarrow T_{\mathbb{P}^2_k},
\end{equation}
where $T_{\mathbb{P}^2_k}$ is the tangent bundle of $\mathbb{P}^2_k$. We can describe $\eta$ in very concrete terms: the direction given by $\eta$ at a point $P \in \mathbb{P}^2_k$ is that of the line passing through $P$ and $(G_0(P) : G_1(P) : G_2(P))$, whenever these two points are distinct. This line is defined away from the closed subscheme $Z \subseteq \mathbb{P}^2_k$ cut out by the maximal minors of the matrix:
$$
\begin{bmatrix}
X_0 & X_1 & X_2 \\
G_0 & G_1 & G_2
\end{bmatrix}.
$$ 
Notice that these minors are $W_\partial([X_0 \,\, X_1]$,
$W_\partial([X_0 \,\, X_2])$ and $W_\partial([X_1 \,\, X_2])$. A point
$P \in Z$ 
is called a \textit{singularity of} $\eta$, or \textit{singular for}
$\eta$. 

The section of $T_{\mathbb{P}^2_k}$ is nonzero, or equivalently, 
$Z \neq \mathbb{P}^2_k$, whence a (singular) \emph{foliation}
of degree $d$ of 
$\mathbb{P}^2_k$, if $\partial$ is not a multiple of the Euler derivation:
$$
\varepsilon:= X_0\partial_{X_0} + X_1\partial_{X_1} + X_2\partial_{X_2}.
$$
The foliation induced by $\partial$ leaves invariant the plane curve $C$ defined
by $F=0$, for $F\in S$ homogeneous, if and only if 
$F|\partial(F)$. In other words, dualizing the map \eqref{2.1.1} 
we get the ``vector field'' 
$\eta^\vee: \Omega_{\mathbb{P}^2_k}^1 \rightarrow
\mathcal{O}_{\mathbb{P}^2_k}(d-1)$; the curve $C$ is invariant by
$\eta^\vee$ if there is a vector field $\eta' : \Omega_C^1 \rightarrow \mathcal{O}_{\mathbb{P}^2_k}(d-1)|_C$ making the following diagram commute:
$$
\begin{CD}
\Omega_{\mathbb{P}^2_k}^1|_C @>\eta^\vee|_C>>
\mathcal{O}_{\mathbb{P}^2_k}(d-1)|_C\\
@VVV @|\\
\Omega_C^1 @>\eta'>> \mathcal{O}_{\mathbb{P}^2_k}(d-1)|_C.
\end{CD}
$$
Also, there are finitely many singularities of the foliation on $C$ if and only if $\gcd(\partial, F)=1$. Here, if $\partial = G_0\partial_{X_0} + G_1\partial_{X_1} + G_2\partial_{X_2}$, then $\gcd(\partial, F)$ is, by definition, the greatest common divisor of $F$ and the maximal minors of the matrix
$$
\begin{bmatrix}
X_0 & X_1 & X_2 \\
G_0 & G_1 & G_2
\end{bmatrix}.
$$ 
When $\gcd(\partial, F)=1$ we say that
$\partial$ is \textit{prime} to $F$. When $F|\partial(F)$ we say that
$\partial$ is a $F$-derivation.
\end{subsct}

Let $k$ be an infinite field and $S:=k[X_0,X_1,X_2]$.
Let $F \in S$ be a nonconstant homogeneous polynomial. 

\begin{definition}\label{2.2.1}\setcounter{equation}{0} Let $G,H\in S$. We say that
  $G$ is \emph{projectively equivalent} to $H$ modulo $F$ in $S$ if
  there are $A\in S$ and $a\in k-\{0\}$ such that $G=aH+AF$. Let
$\partial_1$ and $\partial_2$ be two $F$-derivations. We say that $\partial_1$ and $\partial_2$ are 
\textit{projectively equivalent modulo} $F$, and we denote
$\partial_1\equiv_F\partial_2$, if there is $a \in k-\{0\}$ such that
for each linear form $L$ there are a homogeneous $k$-derivation $\partial$ and a homogeneous polynomial $N \in S$ satisfying
\begin{equation}\label{2.2.1.1}
L(\partial_1 -a\partial_2) = F\partial + N \varepsilon.
\end{equation}
\end{definition}

\begin{proposition}\setcounter{equation}{0}\label{2.2.2} Let $\partial_1$ and $\partial_2$ be two
  $F$-derivations. 
If $\partial_1\equiv_F\partial_2$ and
  $V \subseteq S_d$ is a homogeneous $k$-vector space, then the
  subscheme of $\mathbb{P}^2_k$ cut out by $W_{\partial_1}(V)$ on
  $F$ is the same as that cut out by $W_{\partial_2}(V)$.
\end{proposition}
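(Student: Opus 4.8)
The plan is to prove the sharper statement that, for a fixed basis $a_0,\dots,a_r$ of $V$ (all homogeneous of degree $d$ since $V\subseteq S_d$, with $r+1=\dim_k V$), one has the congruence $W_{\partial_1}(V)\equiv a^{\binom{r+1}{2}}W_{\partial_2}(V)\pmod F$, where $a\in k-\{0\}$ is the scalar furnished by Definition~\ref{2.2.1}. Since $a^{\binom{r+1}{2}}$ is a unit, this immediately gives that $W_{\partial_1}(V)$ and $W_{\partial_2}(V)$ cut out the same subscheme on $F$. For a fixed linear form $L$, I would rewrite the defining relation \eqref{2.2.1.1} as an identity of honest homogeneous derivations of $S$, $L\partial_1=aL\,\partial_2+F\partial+N\varepsilon$, and then attack the two correction terms $F\partial$ and $N\varepsilon$ separately, recording at the outset via property~(1) that $W_{L\partial_1}(V)=L^{\binom{r+1}{2}}W_{\partial_1}(V)$.

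First I would dispose of the term $F\partial$. The key observation is that if $\widehat\partial$ is any $F$-derivation and $\widetilde\partial=\widehat\partial-F\partial$ for a homogeneous derivation $\partial$, then $\widehat\partial^{\,i}(P)\equiv\widetilde\partial^{\,i}(P)\pmod F$ for every $i\geq 0$ and every $P\in S$. This follows by induction on $i$: one writes $\widehat\partial^{\,i}(P)=\widetilde\partial^{\,i}(P)+FR$ and applies $\widehat\partial$; the terms containing $F$ that then appear, namely $\widehat\partial(F)R$, $F\widehat\partial(R)$ and $F\partial(\widetilde\partial^{\,i}(P))$, all vanish modulo $F$—the first precisely because $\widehat\partial$ is an $F$-derivation—leaving $\widetilde\partial^{\,i+1}(P)$. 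Applying this with $\widehat\partial=L\partial_1$ (which is an $F$-derivation, as $L\partial_1(F)$ is a multiple of $\partial_1(F)$, hence of $F$) and $\widetilde\partial=aL\,\partial_2+N\varepsilon$, the Wronskian matrices of $L\partial_1$ and of $\widetilde\partial$ on the basis agree entrywise modulo $F$, so $W_{L\partial_1}(V)\equiv W_{\widetilde\partial}(V)\pmod F$.

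The heart of the argument—and the step I expect to be the main obstacle—is to show that the Euler term in $\widetilde\partial=(aL)\partial_2+N\varepsilon$ contributes only a unit factor, namely $W_{\widetilde\partial}(V)=(aL)^{\binom{r+1}{2}}W_{\partial_2}(V)$ identically in $S$. The mechanism is that $\varepsilon$ acts on a homogeneous polynomial as multiplication by its degree, and this degree is the same across each row of the Wronskian matrix: every $\partial_2^{\,i'}(a_j)$ is homogeneous of degree $d+i'(\deg\partial_2-1)$, independent of $j$. I would prove by induction on $i$ that $\widetilde\partial^{\,i}(a_j)=\sum_{i'\leq i}\mu_{i,i'}\,\partial_2^{\,i'}(a_j)$, where the coefficients $\mu_{i,i'}\in S$ do not depend on $j$ and the leading one is $\mu_{i,i}=(aL)^i$; the inductive step uses only the Leibniz rule together with $\varepsilon\big(\partial_2^{\,i'}(a_j)\big)=\big(d+i'(\deg\partial_2-1)\big)\partial_2^{\,i'}(a_j)$, whose scalar is $j$-independent. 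This exhibits the matrix $\big(\widetilde\partial^{\,i}(a_j)\big)$ as the product of a lower-triangular matrix with diagonal $\big((aL)^0,\dots,(aL)^r\big)$ with the matrix $\big(\partial_2^{\,i}(a_j)\big)$, so taking determinants yields the factor $\prod_{i=0}^r (aL)^i=(aL)^{\binom{r+1}{2}}$ (consistent with property~(1), which is the case $N=0$).

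Combining the two steps with property~(1) gives $L^{\binom{r+1}{2}}W_{\partial_1}(V)\equiv (aL)^{\binom{r+1}{2}}W_{\partial_2}(V)\pmod F$, that is, $F$ divides $L^{\binom{r+1}{2}}\big(W_{\partial_1}(V)-a^{\binom{r+1}{2}}W_{\partial_2}(V)\big)$ for every linear form $L$. Finally I would remove the nuisance factor $L^{\binom{r+1}{2}}$ by letting $L$ vary: for each irreducible factor $Q$ of $F$, the infinitude of $k$ guarantees a linear form $L$ with $Q\nmid L$, whence the exact power $Q^{m}$ of $Q$ in $F$ is coprime to $L^{\binom{r+1}{2}}$ and so divides $W_{\partial_1}(V)-a^{\binom{r+1}{2}}W_{\partial_2}(V)$. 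As these powers are pairwise coprime, $F$ itself divides the difference, which is the desired congruence.
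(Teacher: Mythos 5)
Your proof is correct and follows essentially the same route as the paper's: the paper's terse appeal to the identity $L(\partial_1-a\partial_2)=F\partial+N\varepsilon$ and ``the multilinearity of the determinant'' is exactly your two-step computation showing that $F$ divides $L^{\binom{r+1}{2}}\bigl(W_{\partial_1}(V)-a^{\binom{r+1}{2}}W_{\partial_2}(V)\bigr)$, and the closing use of the infinitude of $k$ to strip off the factor $L^{\binom{r+1}{2}}$ is likewise the paper's final step. You have merely supplied the details the paper leaves implicit, namely the mod-$F$ comparison of iterates of an $F$-derivation and the triangular change of rows that absorbs the Euler term.
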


\begin{proof} Indeed, fixing a basis of $V$, it follows from \eqref{2.2.1.1} and the multilinearity of the determinant that $F$ divides
$$
L^{\binom {r+1} {2}}(W_{\partial_1}(V)-a^{\binom {r+1} {2}}W_{\partial_2}(V))
$$
for each linear form $L$, where $\dim_k(V)=r+1$. Thus, since $k$ is infinite,
$$
W_{\partial_1}(V)\equiv_F a^{\binom {r+1} {2}}W_{\partial_2}(V)
$$
\end{proof}

\begin{subsct}\label{ramifsch}\setcounter{equation}{0}
{\bf Ramification schemes.} Let $k$ be a field of characteristic zero
and $S:=k[X_0,X_1,X_2]$.
Let $F \in S$ be a
nonzero homogeneous polynomial of degree $d > 0$. The equation $F=0$
defines a projective plane curve $C\subset \mathbb{P}^2_k$.

Let $V \subset S$ be a homogeneous $k$-vector space of degree $e$ and
dimension $r+1$, for certain integers $e > 0$ and $r\geq 0$. The space
$V$ induces a linear system of (projective) rank $r$ and degree $de$
on $C$. Let $R_F(V)$ denote the \textit{ramification scheme} of $C$
associated to $V$. On the open subset
$X_i \neq 0$, the ramification scheme is the locus cut out by $F$ and
the Wronskian $W_{\partial_{F,X_i}}(V)$, for $i=0,1,2$. 

Now, $R_F(V)$ might be infinite, indeed:
\end{subsct}

\begin{proposition}\label{2.3.1} {\rm (\cite{E1}, Prop.~7.8, p.~133)} The ramification
  scheme $R_F(V)$ is finite if and only if $F$ is square-free and
  the linear system $V$ is nondegenerate on each geometric irreducible
  component of
  $C$. 
\end{proposition}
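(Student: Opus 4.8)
The plan is to reduce finiteness of $R_F(V)$ to a componentwise statement at generic points and then invoke the classical Wronskian criterion for linear dependence. Since $R_F(V)$ is finite if and only if no irreducible component of $C$ is contained in it, and since on the chart $X_i\neq 0$ the scheme is cut out by $F$ and $W_{\partial_{F,X_i}}(V)$, the real question is: for which components $D$ of $C$ does $W_{\partial_{F,X_i}}(V)$ vanish identically on $D$? Finiteness of a scheme is insensitive to base field extension, and after extending to $\bar k$ the geometric irreducible components of $C$ become its ordinary components while the ground field becomes algebraically closed; I would therefore assume $k=\bar k$ from the outset, so that ``geometric component'' simply means ``component''.

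First I would dispose of the square-free condition. If $F$ is not square-free, write $F=G^mH$ with $G$ irreducible, $m\geq 2$ and $\gcd(G,H)=1$. Then each partial $\partial_{X_j}(F)$ is divisible by $G$, hence so is every coefficient of $\partial:=\partial_{F,X_i}$, since these coefficients are the $2\times 2$ minors of the matrix with rows $\nabla(F)$ and $\nabla(X_i)$. Thus $\partial=G\partial'$ for a homogeneous derivation $\partial'$, and property $(1)$ of the Wronskian gives $W_{\partial_{F,X_i}}(V)=G^{\binom{r+1}{2}}W_{\partial'}(V)$. For $r\geq 1$ the exponent $\binom{r+1}{2}$ is positive, so $G$ divides the Wronskian and the whole component $\{G=0\}$ lies in $R_F(V)$, which is therefore infinite. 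This shows that square-freeness of $F$ is necessary.

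Assuming now that $F$ is square-free, I would work one component $D=\{F_j=0\}$ at a time. Because $F$ is square-free, $F_j\mid\partial(F_j)$, so the foliation leaves $D$ invariant and $\partial:=\partial_{F,X_i}$ induces a $k$-derivation on the function field $K:=k(D)$. Two facts then drive the argument. (i) For at least one chart $X_i\neq 0$ meeting $D$, the induced derivation on $K$ is nonzero: if all three of $\partial_{F,X_0},\partial_{F,X_1},\partial_{F,X_2}$ vanished on $D$, then all three partials $\partial_{X_j}(F)$ would lie in the ideal of $D$, forcing $D$ into the singular locus of the reduced curve $C$; but that locus is finite, so a one-dimensional $D$ cannot sit inside it. (ii) The field of constants satisfies $K^{\partial}=k$: any constant is algebraic over $k$ (in characteristic zero a derivation on $K$ extends uniquely to any finite, hence separable, extension, so a transcendental constant would force $\partial=0$ on all of $K$, contradicting (i)), and $k=\bar k$ is algebraically closed. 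With these in hand, the Wronskian criterion says that $W_{\partial}(V)$ vanishes at the generic point of $D$, equivalently on all of $D$, if and only if a basis of $V$ restricts to a $K^{\partial}$-linearly dependent family in $K$, that is, if and only if $V$ is degenerate on $D$.

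Putting the pieces together: when $F$ is square-free, $R_F(V)$ is finite if and only if $W_{\partial_{F,X_i}}(V)$ vanishes on no component $D$, which by (i)--(ii) happens exactly when $V$ is nondegenerate on every component of $C$; combined with the previous paragraph this yields the stated equivalence over $\bar k$, and hence over $k$ with ``geometric component'' in place of ``component''. The main obstacle is the pair of facts (i) and (ii). Point (ii) is where geometric irreducibility and characteristic zero are indispensable: for a component irreducible over $k$ but splitting over $\bar k$, or in positive characteristic, the constant field $K^{\partial}$ can be strictly larger than $k$, so that ``dependent over the constants'' no longer matches ``degenerate over $k$'', which is precisely why the hypothesis must be phrased in terms of geometric components. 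The nonvanishing in (i) needs the little chart-juggling above together with the finiteness of the singular locus of a reduced plane curve, and the nontrivial implication of the Wronskian criterion ($W=0\Rightarrow$ linear dependence) again uses characteristic zero.
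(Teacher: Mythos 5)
The paper does not prove this proposition at all: it is quoted from \cite{E1}, Prop.~7.8, so your self-contained argument is necessarily a different route, and it is essentially the right one. The reduction to $\bar k$, the factorization $\partial_{F,X_i}=G\partial'$ giving $G^{\binom{r+1}{2}}\mid W_{\partial_{F,X_i}}(V)$ when $G^2\mid F$, and the componentwise use of the classical Wronskian criterion over the constant field $K^{\partial}=k$ (which is exactly where geometric irreducibility and characteristic zero enter, as you correctly isolate) are the standard mechanism behind the cited result. Two points deserve tightening. First, your necessity argument for square-freeness requires $r\ge 1$; for $r=0$ the Wronskian is just a generator of $V$, and the scheme it cuts on $C$ can be finite even when $F$ has multiple factors, so the ``only if'' direction is genuinely an $r\ge 1$ statement (harmless for the paper's applications, but worth excluding explicitly since the statement allows $r=0$). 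Second, finiteness requires $W_{\partial_{F,X_i}}(V)$ to be nonzero at the generic point of $D$ for \emph{every} chart $X_i\neq 0$ meeting $D$, not merely for one: if the induced derivation on $k(D)$ were zero for some chart meeting $D$, the corresponding Wronskian would vanish along $D$ there and $R_F(V)$ would still be infinite. Your own tools close this: for any $i$ with $X_i\notin I(D)$, the derivation induced by $\partial_{F,X_i}$ on $k(D)$ vanishes only if the two partials $\partial_{X_j}(F)$, $j\neq i$, lie in $I(D)$, and then the Euler relation together with $F\in I(D)$ and $X_i\notin I(D)$ forces the third partial into $I(D)$ as well, placing the one-dimensional $D$ inside the finite singular locus of the reduced curve. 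You should also record explicitly the routine normalization identifying $W_{\partial_{F,X_i}}(V)$ restricted to $D$ with, up to a power of $X_i$, the Wronskian of the dehomogenized basis with respect to the degree-preserving derivation $X_i^{2-d}\partial_{F,X_i}$; with these adjustments the proof is complete.
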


In other words, denoting by $\bar{k}$ an algebraic closure
  of $k$, the ramification scheme $R_F(V)$ is finite if and only if the
  irreducible factors of the polynomial $F$ in
  $\bar{k}[X_0,X_1,X_2]$ are distinct and do not divide any nonzero
  element of $V\otimes_k\bar{k}$.

If $R_F(V)$ is finite, then $R_F(V)$ may be viewed
as a Cartier divisor of $C$. Before showing the next result we need introduce a few more concepts.

Let $P,Q \in S$ be nonconstant homogeneous polynomials with $\gcd(P,Q)=1$. Let $(P\cdot Q)$ denote the subscheme of $\mathbb{P}^2_k$ cut out by $P$ and $Q$, and $[P \cdot Q]$ the associated $0$-cycle. We will also view $(P \cdot Q)$ as a Cartier divisor of the curve cut out by $P=0$ or $Q=0$.

\begin{lemma}\label{2.3.2} Let $k$ be a field of characteristic zero. Let $P \in
  S:=k[X_0,X_1,X_2]$ be a nonzero homogeneous polynomial, and $C\subset
  \mathbb{P}^2_k$ the curve given by $P=0$. Let $V \subset S$ be a
  homogeneous $k$-vector space of dimension $r+1$, for a nonnegative
  integer $r$. 
Then the following four statements hold:
\begin{enumerate}
\item If $Q_1,Q_2 \in S$ are nonconstant and homogeneous, then
	$$Q_2\partial_{P,Q_1} \equiv_P Q_1\partial_{P,Q_2}.$$
\item For each nonconstant homogeneous polynomial $Q \in S$ prime to $P$, 
the ramification scheme $R_P(V)$ associated to $V$ on $C$ 
is finite if and only if $\gcd(W_{\partial_{P,Q}}(V),P)=1$, and in this case
\begin{equation}\label{2.3.2.1}
(W_{\partial_{P,Q}}(V)\cdot P) = R_P(V) + \binom {r+1} {2} (Q\cdot P)
\end{equation}
as Cartier divisors of $C$.
\item If $P$ is square-free, then $\gcd(\partial_{P,Q},P)=1$ for each nonconstant homogeneous polynomial $Q \in S$ prime to $P$.
\item Let $\partial$ be a $P$-derivation with
  $\gcd(\partial,P)=1$. If $P$ is square-free and 
$V$ is nondegenerate on each geometric irreducible component of $C$, then $\gcd(W_{\partial}(V),P)=1$.
\end{enumerate}
\end{lemma}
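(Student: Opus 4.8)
The plan is to translate everything into the language of cross products. I identify a homogeneous derivation $G_0\partial_{X_0}+G_1\partial_{X_1}+G_2\partial_{X_2}$ with its coefficient row $[G_0\ G_1\ G_2]$; then $\partial_{P,Q}$ becomes $\nabla(P)\times\nabla(Q)$ and $\varepsilon$ becomes $[X_0\ X_1\ X_2]$. In this dictionary $\partial_{P,Q}(f)=\det[\nabla(P);\nabla(Q);\nabla(f)]$, so $\partial_{P,Q}(P)=\partial_{P,Q}(Q)=0$, and Euler's identity reads $\nabla(P)\cdot[X_0\ X_1\ X_2]=(\deg P)\,P$. The whole lemma is driven by the single syzygy
\[
(\deg Q_2)\,Q_2\,\partial_{P,Q_1}-(\deg Q_1)\,Q_1\,\partial_{P,Q_2}=\partial_{Q_1,Q_2}(P)\,\varepsilon-(\deg P)\,P\,\partial_{Q_1,Q_2},
\]
which I would prove by a dot-product comparison: viewing both sides as rows, I check that they have equal inner product with each of $\nabla(P)$, $\nabla(Q_1)$, $\nabla(Q_2)$, each such product being computed from Euler's identity and the scalar triple products $(\nabla(P)\times\nabla(Q_i))\cdot\nabla(Q_j)=\pm\,\partial_{Q_1,Q_2}(P)$. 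Since these three gradients form a basis wherever $\partial_{Q_1,Q_2}(P)\neq0$, equality on that dense open set forces the polynomial identity. Isolating this identity is the main obstacle; once it is in hand the rest is largely formal.

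Item (1) follows at once: dividing the syzygy by $\deg Q_2$ writes $Q_2\,\partial_{P,Q_1}-a\,Q_1\,\partial_{P,Q_2}$, with $a=\deg Q_1/\deg Q_2\in k\setminus\{0\}$ (here I use $\mathrm{char}\,k=0$), in the shape $N\varepsilon+P\partial$ for $N=\partial_{Q_1,Q_2}(P)/\deg Q_2$ and $\partial=-(\deg P/\deg Q_2)\,\partial_{Q_1,Q_2}$; multiplying by an arbitrary linear form $L$ matches Definition~\ref{2.2.1}, so $Q_2\,\partial_{P,Q_1}\equiv_P Q_1\,\partial_{P,Q_2}$. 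For item (3) I compute $\gcd(\partial_{P,Q},P)$ directly. The maximal minors attached to $\partial_{P,Q}=\nabla(P)\times\nabla(Q)$ are the entries of $[X_0\ X_1\ X_2]\times(\nabla(P)\times\nabla(Q))=(\deg Q)\,Q\,\nabla(P)-(\deg P)\,P\,\nabla(Q)$. If an irreducible $h\mid P$ divided all of them, then $h\mid(\deg Q)\,Q\,\partial_{X_j}(P)$ for every $j$; since $h\nmid Q$ (as $\gcd(P,Q)=1$) and $\deg Q\neq0$, this forces $h\mid\partial_{X_j}(P)$ for all $j$. Writing $P=hm$ with $h\nmid m$, valid as $P$ is square-free, gives $h\mid(\partial_{X_j}h)\,m$, hence $h\mid\partial_{X_j}h$, hence $\partial_{X_j}h=0$ for all $j$ by degrees, so $h$ is constant --- a contradiction. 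Thus $\gcd(\partial_{P,Q},P)=1$.

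For item (2) I apply (1) with $Q_1=X_i$ and $Q_2=Q$ to get $Q\,\partial_{P,X_i}\equiv_P X_i\,\partial_{P,Q}$. By Proposition~\ref{2.2.2} and the identity $W_{c\partial}(v)=c^{\binom{r+1}{2}}W_\partial(v)$, the polynomials $Q^{\binom{r+1}{2}}W_{\partial_{P,X_i}}(V)$ and $X_i^{\binom{r+1}{2}}W_{\partial_{P,Q}}(V)$ are projectively equivalent modulo $P$, hence cut the same divisor on $C$. On the chart $X_i\neq0$, where $(X_i\cdot P)=0$ and $(W_{\partial_{P,X_i}}(V)\cdot P)=R_P(V)$ by the definition of the ramification scheme, this reads $(W_{\partial_{P,Q}}(V)\cdot P)=R_P(V)+\binom{r+1}{2}(Q\cdot P)$; gluing over $i=0,1,2$ yields \eqref{2.3.2.1}. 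The same congruence modulo $P$, together with $Q$ and the $X_i$ being prime to every component of $C$, shows that $W_{\partial_{P,Q}}(V)$ and the $W_{\partial_{P,X_i}}(V)$ vanish on the same components of $C$; hence $\gcd(W_{\partial_{P,Q}}(V),P)=1$ precisely when $R_P(V)$ contains no component, that is, precisely when $R_P(V)$ is finite.

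Item (4) I would treat one geometric component at a time, after base change to $\bar k$. Let $C'$ be such a component, cut by an irreducible $h\mid P$, with function field $K=\bar k(C')$. Since $\partial$ is a $P$-derivation with $\gcd(\partial,P)=1$, it descends to a nonzero $\bar k$-derivation $D$ of $K$, and on a chart meeting $C'$ the homogeneous Wronskian $W_\partial(V)$ dehomogenizes to a nonzero multiple of the ordinary Wronskian $W_D(\bar a_0,\dots,\bar a_r)$ of the restrictions to $K$ of a $k$-basis $a_0,\dots,a_r$ of $V$. By the classical Wronskian criterion this vanishes iff $\bar a_0,\dots,\bar a_r$ are linearly dependent over the constants $K^D$; and in characteristic $0$ a nonzero derivation of a one-variable function field has $K^D=\bar k$. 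Thus $W_\partial(V)$ vanishes on $C'$ iff some nonzero element of $V\otimes_k\bar k$ vanishes on $C'$, which the nondegeneracy of $V$ on $C'$ forbids. As $C'$ is arbitrary and $P$ is square-free, $\gcd(W_\partial(V),P)=1$. Beyond the syzygy, the step I expect to require the most care is this last one: justifying the descent of $\partial$ to $K$ and the identification of $W_\partial(V)|_{C'}$ with the function-field Wronskian, including the bookkeeping of the Euler direction.
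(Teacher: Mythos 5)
Your proposal is correct in substance and, for two of the four items, takes a genuinely different route from the paper. For item (1) the paper fixes a linear form $L$ (reduced to $L=X_2$), replaces the last column of the defining determinant by the Euler derivation, and expands $X_2(q_2Q_2\partial_{P,Q_1}-q_1Q_1\partial_{P,Q_2})$ explicitly; your $L$-free syzygy
\[
q_2Q_2\,\partial_{P,Q_1}-q_1Q_1\,\partial_{P,Q_2}=\partial_{Q_1,Q_2}(P)\,\varepsilon-(\deg P)\,P\,\partial_{Q_1,Q_2}
\]
is exactly the reciprocal-basis expansion $[a,b,c]\,x=(x\cdot a)(b\times c)+(x\cdot b)(c\times a)+(x\cdot c)(a\times b)$ applied to the three gradients, and it is cleaner: multiplying by an arbitrary $L$ then matches Definition~\ref{2.2.1} at once. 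Item (2) is essentially the paper's argument. Item (3) is where you genuinely diverge: the paper deduces it from item (2) applied to $V=\langle X_0,X_1\rangle$ together with Proposition~\ref{2.3.1}, whereas you compute the maximal minors directly as the entries of $x\times(\nabla P\times\nabla Q)=(\deg Q)\,Q\,\nabla P-(\deg P)\,P\,\nabla Q$ and reduce to ``all partials of an irreducible factor vanish''; this is more elementary and does not lean on the cited finiteness criterion. For item (4) the paper compares $\partial$ with $\partial_{P,X_2}$ on a dense open subset of each component (using that $\Omega^1_C$ is generically invertible) and again invokes item (2) and Proposition~\ref{2.3.1}; your function-field argument ($K^D=\bar k$ for a nonzero derivation in characteristic zero, plus the classical Wronskian criterion) in effect reproves the relevant half of Proposition~\ref{2.3.1} from scratch. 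That buys self-containedness at the cost of the homogeneous-to-affine bookkeeping you flag (descent of $\partial$ modulo the Euler direction, nonvanishing of the induced $D$ from $\gcd(\partial,P)=1$, and the identification of $W_\partial(V)|_{C'}$ with $W_D$ up to a unit); all of that goes through.

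Two points need shoring up. First, your proposed verification of the syzygy --- equal dot products against $\nabla(P),\nabla(Q_1),\nabla(Q_2)$, then conclude on the open set where these form a basis --- fails when $\partial_{Q_1,Q_2}(P)$ vanishes identically (e.g.\ $P=X_2^d$, $Q_1=X_0$, $Q_2=X_0^2$, which coprimality does not exclude), since then that open set is empty. The identity is nevertheless a formal polynomial identity in the coordinates of four indeterminate vectors, so prove it for indeterminates (where the triple product is a nonzero polynomial) and specialize, or expand directly. Second, in item (2) the phrase ``the $X_i$ being prime to every component of $C$'' is not literally true when some $X_i$ divides $P$; argue as the paper does, component by component, using for each component of $C$ a chart $X_i\neq 0$ that it meets.
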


\begin{proof} Let us prove the first statement. Let $L$ be any nonzero
  linear homogeneous polynomial. We may assume without loss of
  generality that $L=X_2$.

For each homogeneous polynomial $Q$, let
\begin{align*}
\partial'_{P,Q}:= 
\begin{vmatrix} 
\partial_{X_0}(P) & \partial_{X_1}(P) & \varepsilon(P) \\
\partial_{X_0}(Q) & \partial_{X_1}(Q) & \varepsilon(Q) \\
\partial_{X_0}    & \partial_{X_1}    & \varepsilon
\end{vmatrix} :=&
\begin{vmatrix} 
\partial_{X_1}(P) & \varepsilon(P) \\
\partial_{X_1}(Q) & \varepsilon(Q)
\end{vmatrix}
\partial_{X_0} -
\begin{vmatrix} 
\partial_{X_0}(P) & \varepsilon(P) \\
\partial_{X_0}(Q) & \varepsilon(Q)
\end{vmatrix}
\partial_{X_1}\\
&+
\begin{vmatrix} 
\partial_{X_0}(P) & \partial_{X_1}(P) \\
\partial_{X_0}(Q) & \partial_{X_1}(Q)
\end{vmatrix} 
\varepsilon,
\end{align*}
where $\varepsilon$ is the Euler derivation. Notice that 
$X_2\partial_{P,Q}=\partial_{P,Q}'$. Let $q_1$ and $q_2$ be the degrees of $Q_1$ and $Q_2$, and set
\begin{align*}
Q_{X_0}:=&q_2Q_2\partial_{X_0}(Q_1) - q_1Q_1\partial_{X_0}(Q_2),\\
Q_{X_1}:=&q_2Q_2\partial_{X_1}(Q_1) - q_1Q_1\partial_{X_1}(Q_2).
\end{align*}
Since 
$$q_2Q_2\varepsilon(Q_1)-q_1Q_1\varepsilon(Q_2)=q_2Q_2q_1Q_1-q_1Q_1q_2Q_2=0,$$
we have 
\begin{align*}
{X_2}(q_2Q_2\partial_{P,Q_1}-q_1Q_1\partial_{P,Q_2})=
&q_2Q_2\partial'_{P,Q_1}-q_1Q_1\partial'_{P,Q_2}\\
=&pP 
\begin{vmatrix} 
Q_{X_0}          & Q_{X_1} \\
\partial_{X_0} & \partial_{X_1}
\end{vmatrix} 
+
\begin{vmatrix} 
\partial_{X_0}(P) & \partial_{X_1}(P) \\
Q_{X_0}	          & Q_{X_1}
\end{vmatrix}
\varepsilon,
\end{align*}
where $p$ is the degree of $P$. The proof of Statement 1 is complete.

To prove the remaining statements, we may assume $k$ is algebraically closed. Let us prove the second statement. Apply Statement 1 to $Q_1:=Q$ and $Q_2:={X_2}$. Then $X_2\partial_{P,Q}$ and $Q\partial_{P,X_2}$ are equivalent modulo $P$, and hence
\begin{equation}\label{2.3.2.2}
X_2^{\binom {r+1} {2}}W_{\partial_{P,Q}}(V)\equiv_P cQ^{\binom {r+1} {2}}W_{\partial_{P,X_2}}(V),
\end{equation}
for some $c \in k^\ast$. Now, $W_{\partial_{P,X_2}}(V)=0$ cuts out the subscheme $R_P(V)$ on $C$ in the open set $X_2 \neq 0$. Since $\gcd(Q,P)=1$, it follows from \eqref{2.3.2.2} that $R_P(V)$ is finite on $X_2 \neq 0$ if and only if $\gcd(W_{\partial_{P,Q}}(V),P)$ is a power of $X_2$. Applying the same argument to the open sets $X_0 \neq 0$ and $X_1 \neq 0$, it follows that $R_P(V)$ is finite if and only if $\gcd(W_{\partial_{P,Q}}(V),P)=1$. 

Furthermore, if $\gcd(W_{\partial_{P,Q}}(V),P)=1$, then \eqref{2.3.2.2} yields that
$$
\binom {r+1} {2} (X_2\cdot P) + (W_{\partial_{P,Q}}(V)\cdot P)= \binom {r+1} {2} (Q\cdot P) + (W_{\partial_{P,X_2}}(V)\cdot P).
$$
Thus, on the open set $X_2 \neq 0$ the equation \eqref{2.3.2.1} is
true. By analogy, 
\eqref{2.3.2.1} holds everywhere.

Now, let us prove that the third statement follows from the second. Since $k$ is infinite, we may assume that $P$ has no linear factor which is a linear combination of just two coordinate functions, say $X_0$ and $X_1$.  Let $Q \in k[X_0,X_1,X_2]$ be a nonconstant homogeneous polynomial prime to $P$ and let $V \subset k[X_0,X_1,X_2]$ be the $k$-vector subspace spanned by $X_0,X_1$. Since $\gcd(L,P)=1$ for each $L \in V$ and $P$ is square-free, Proposition $2.3.1$ implies that $R_P(V)$ is finite. So, it follows from the Statement 2 that $W_{\partial_{P,Q}}(V)$ is prime to $P$, and then $\partial_{P,Q}$ is prime to $P$.

Finally, let us prove the last statement. We may assume $P$ is
irreducible and not a multiple of $X_2$. By Proposition \ref{2.3.1} the scheme $R_P(V)$ is finite,
and thus $P$ does not divide $W_{\partial_{P,X_2}}(V)$ by Statement 2.
Since $\Omega_C^1$ is generically invertible, there is a dense open
subset $U \subset C$ such that
$\partial = \partial_{P,X_2}$
on $U$. It follows that
$$
(W_{\partial}(V)\cdot P) = (W_{\partial_{P,X_2}}(V) \cdot P)
$$
on $U$. Since $(W_{\partial_{P,X_2}}(V) \cdot P)$ is finite, $\gcd(W_{\partial}(V),P)=1$.
\end{proof}

\section{Infinitesimal families and limits}\label{families}

\begin{subsct}\setcounter{equation}{0}
{\bf Families and limits.} Let $k$ be an algebraically closed field of
characteristic zero, $k[[t]]$ the ring of formal power series and
$k((t)):=k[[t]][1/t]$ the 
field of formal Laurent series. Let $S:=k[X_0,X_1,X_2]$. Put
$S[[t]]:=S\otimes_k k[[t]]$ and $S((t)):=S\otimes_k k((t))$. 
View $S[[t]]$ (resp.~$S((t))$) with the induced grading, where $t$ has degree zero. A homogeneous element of $S[[t]]$ (resp.~$S((t))$) will be called a \textit{homogeneous power series} (resp.~\textit{Laurent series}).

For each $k$-vector space $V$, let $V[[t]]$ be the $k[[t]]$-module of power series on $t$ with coefficients in $V$. Given $P(t) \in V[[t]]$, denote by $P(0)$ the constant coefficient.

Let $V(t)\subseteq S[[t]]$ be a $k[[t]]$-submodule. We say that $V(t)$ is
\textit{saturated} if for each $P(t) \in S[[t]]$ such that $tP(t) \in V(t)$,
then also $P(t) \in V(t)$. Assume that $V(t)$ is a nonzero, saturated and
homogeneous $k[[t]]$-submodule of $S[[t]]$. Thus, since $k[[t]]$ is a
principal ideal domain, $V(t)$ is free, of rank $r+1$ for some integer
$r\geq 0$, and $V(t)$ has a $k[[t]]$-basis $[P_0(t) \,\, \cdots \,\, P_r(t)]$ 
of homogeneous power series whose constant coefficients 
are linearly independent over $k$. Denote by $V(0)$ the 
$k$-vector space spanned by $P_0(0), ..., P_r(0)$. 

We view $V(t)$ as a family of linear systems on the projective plane
with limit $V(0)$. 

Let $F(t)\in S_e[[t]]$ with $F(0)\neq 0$, where $e$ is a positive
integer. We view $F(t)=0$ as defining a family $C(t)$ 
of plane curves of degree $e$.  The
generic curve $C^\ast$ is cut out by $F^\ast=0$, 
which is $F(t)$ viewed as an element of
$S_e((t))$. 
Let $V(t)\subseteq S_d[[t]]$ be a nonzero saturated
$k[[t]]$-submodule, 
where $d$ is a positive integer. The $k[[t]]$-module $V(t)$ induces a
family of linear systems of degree $de$ on the family of curves $C(t)$. The generic linear system is induced by $V^\ast$,
which is just $V(t)[1/t]$, viewed as a $k((t))$-vector subspace of
$S_d((t))$.

Generally, we use the superscript ``$ ^\ast$'' to mean that a certain
family of ``objects'' should be considered as an ``object'' over $k((t))$.

For each closed subscheme $R \subset \mathbb{P}^2_{k((t))}$, we denote by
$$
\lim_{t \to 0}R\subseteq \mathbb{P}^2_k
$$
its schematic boundary in $\mathbb{P}^2_k$, called \emph{limit}.

Assume the generic ramification scheme $R_{F^\ast}(V^\ast) \subset \mathbb{P}^2_{k((t))}$ is finite. Denote by $R_F^0(V)$ the schematic boundary of $R_{F^\ast}(V^\ast)$ in $\mathbb{P}^2_k$, and denote by $[R_F^0(V)]$ the associated $0$-cycle. Our aim is to compute $[R_F^0(V)]$.
\end{subsct}

\begin{subsct}\setcounter{equation}{0}
{\bf $F(t)$-derivations.} Let $k$ be an algebraically closed field of characteristic 0  and $S:=k[X_0,X_1,X_2]$. 
Let $F(t) \in S_e[[t]]$ with $F(0) \neq 0$. Let $V(t) \subset S[[t]]$ be a nonzero, homogeneous, saturated $k[[t]]$-submodule of rank $r+1$, for some integer $r > 0$. 

To compute the schematic boundary $R_F^0(V)$ of the generic ramification scheme $R_{F^\ast}(V^\ast)$, we will consider homogeneous $k[[t]]$-derivations $\partial(t)$ of $S[[t]]$. Such derivations can be expressed in terms of the natural basis $\partial_{X_0},\partial_{X_1},\partial_{X_2}$ in the form
$$
\partial(t) = G_0(t)\partial_{X_0} + G_1(t)\partial_{X_1} + G_2(t)\partial_{X_2},
$$
where $G_0(t),G_1(t),G_2(t)$ are homogeneous power series with the
same degree, 
say $m$. Set
$$
\partial(0) := G_0(0)\partial_{X_0} + G_1(0)\partial_{X_1} + G_2(0)\partial_{X_2}.
$$
If $\partial(0)$ is not a multiple of the Euler derivation, then
$\partial(t)$ gives a family of singular foliations of the plane. We say $\partial(t)$ is a
$F(t)$-\textit{derivation} if $F(t)|\partial(t)(F(t))$. Geometrically,
the family of foliations given by $\partial(t)$ leaves invariant
the family of plane curves $C(t)$ defined by $F(t)=0$.

A simple example of a $F(t)$-derivation is
$$
\partial_{(F(t),H(t))}=\det \begin{bmatrix}
\partial_{X_0}(F(t)) & \partial_{X_1}(F(t)) & \partial_{X_2}(F(t)) \\
\partial_{X_0}(H(t)) & \partial_{X_1}(H(t)) & \partial_{X_2}(H(t)) \\
\partial_{X_0} & \partial_{X_1} & \partial_{X_2}
\end{bmatrix}, 
$$
where $H(t)$ is any homogeneous power series. If $H(t)$ has positive degree,
and $H^\ast$ and $F^\ast$ are coprime in $S((t))$, we can use
$\partial_{F^\ast,H^\ast}$ to compute $R_{F^\ast}(V^\ast)$ on the
generic curve $C^\ast$ defined by $F^\ast=0$. Indeed, assuming that
$R_{F^\ast}(V^\ast)$ is finite, 
by Lemma \ref{2.3.2}, its expression as a Cartier divisor on the generic curve is:
\begin{equation}\label{3.2.0.1}
R_{F^\ast}(V^\ast) = \big(W_{\partial_{F^\ast,H^\ast}}(V^\ast)\cdot F^\ast\big) 
- \binom {r+1} {2}(H^\ast \cdot F^\ast).
\end{equation}
\end{subsct}

\begin{remark}\label{3.2.1} To compute
$R_{F^\ast}(V^\ast)$ we can simply pick $H \in S$ homogeneous,
nonconstant and prime to $F(0)$. In this case, to compute 
$R_F^0(V)$ we can use Expression~\eqref{3.2.0.1}. However, the 
schematic boundary of $\big(W_{\partial_{F^\ast,H^\ast}}(V^\ast)\cdot F^\ast\big)$ 
will not necessarily be $\big(W_{\partial_{F(0),H}}(V(0))\cdot F(0)\big)$. 
In fact, the latter might not even make sense. It will not 
when an irreducible factor of $F(0)$ is multiple 
or divides a nonzero polynomial of $V(0)$. In any of these 
cases, this factor will also be a factor of $W_{\partial_{F(0),H}}(V(0))$.
\end{remark}

\section{Degenerations along a general direction}
\label{example0}

\begin{subsct}\label{3.3}\setcounter{equation}{0}
{\bf Reduced $F(t)$-derivations.} Let $k$ be an algebraically closed 
field of characteristic zero, and $S:=k[X_0,X_1,X_2]$. For each 
nonzero polynomial $P \in S$, write
$$
P=\prod_{i=1}^mE_i^{e_i},
$$
where $E_1,...,E_m$ are the irreducible factors of $P$. Let
$$
\overline{\nabla}(P):= \Big(\prod_{i=1}^m E_i\Big)\frac{\nabla (P)}{P} =
 \Big(\prod_{i=1}^m E_i\Big)\sum_{i=1}^m e_i \frac{\nabla (E_i)}{E_i}.
$$
Notice that
$$
\nabla(P)=(\prod_iE_i^{e_i - 1} ) \cdot \overline{\nabla}(P),
$$
for every $P \in S$.

Let $F(t) \in S[[t]]$ be a homogeneous power series of positive degree and nonzero constant coefficient $F(0)$. Let
$$
H(t):= (F(t) - F(0) )/t,
$$
and put
$$
\partial(t):= 
\begin{vmatrix}
\overline{\nabla}(F(0)) \\
\nabla(H(t))			    \\
\nabla
\end{vmatrix}.
$$
The derivation $\partial(t)$ is an $F(t)$-derivation. Indeed, 
first $\partial(t)(H(t))=0$. In addition, $\partial(t)(F(0))=0$, 
since $\nabla(F(0))$ is a multiple of the first row of the matrix
whose determinant is
$\partial(t)$. Thus
$$
\partial(t)(F(t)) = \partial(t)(F(0)) + t\partial(t)(H(t))=0.
$$
We say that $\partial(t)$ is the \textit{reduced} $F(t)$-derivation.
\end{subsct}

\begin{lemma}\label{3.3.1} 
With notation as above, if $\gcd(F(0),H(0))=1$ then $\gcd(\partial(0),F(0))=1$.
\end{lemma}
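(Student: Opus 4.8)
The plan is to test the gcd one irreducible factor at a time and, on each factor, to identify $\partial(0)$ with a Jacobian derivation to which Statement~3 of Lemma~\ref{2.3.2} applies. Write $F(0) = \prod_{i=1}^m E_i^{e_i}$ and $\partial(0) = G_0\partial_{X_0} + G_1\partial_{X_1} + G_2\partial_{X_2}$, so that $\gcd(\partial(0),F(0))$ is the gcd of $F(0)$ with the three minors $X_1G_2 - X_2G_1$, $X_0G_2 - X_2G_0$, $X_0G_1 - X_1G_0$. Any irreducible factor of this gcd divides $F(0)$, hence equals some $E_i$, and divides all three minors; so it suffices to show that each $E_i$ fails to divide at least one minor.

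Fix $i$ and set $E := E_i$ and $D := \prod_{l\neq i} E_l$, which is prime to $E$. The crux is to reduce $\overline{\nabla}(F(0))$ modulo $E$. From $\overline{\nabla}(F(0)) = \sum_{j} e_j\big(\prod_{l\neq j}E_l\big)\nabla(E_j)$, every summand with $j\neq i$ carries the factor $E$, so modulo $E$ only the $j=i$ term survives, giving $\overline{\nabla}(F(0)) \equiv e_i\,D\,\nabla(E) \pmod E$; here $e_i\neq 0$ because the characteristic is zero. Since $\partial(0)$ is the determinant whose first row is $\overline{\nabla}(F(0))$, whose second row is $\nabla(H(0))$, and whose third row is $\nabla$, and the determinant is linear in its first row, this congruence propagates to the coefficients: $G_j \equiv e_i\,D\,g_j \pmod E$, where $g_0,g_1,g_2$ are the coefficients of $\partial_{E,H(0)}$. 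Hence each maximal minor of $\partial(0)$ is congruent modulo $E$ to $e_i D$ times the corresponding minor of $\partial_{E,H(0)}$.

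Because $e_iD$ is prime to $E$, the factor $E$ divides all three minors of $\partial(0)$ if and only if it divides all three minors of $\partial_{E,H(0)}$, i.e.\ if and only if $\gcd(\partial_{E,H(0)},E)\neq 1$. Now $E$ is irreducible, hence square-free, and the hypothesis $\gcd(F(0),H(0))=1$ yields $\gcd(E,H(0))=1$; it also forces $H(0)\neq 0$, so $H(0)$ is a nonconstant homogeneous polynomial (of degree equal to that of $F(0)$, which is positive) prime to $E$. Statement~3 of Lemma~\ref{2.3.2} therefore gives $\gcd(\partial_{E,H(0)},E)=1$, so $E$ does not divide all three minors of $\partial(0)$. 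Since this holds for every $i$, we conclude $\gcd(\partial(0),F(0))=1$.

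I expect the main obstacle to be the reduction step: correctly tracking how the multiplicities $e_j$ and the complementary products $\prod_{l\neq j}E_l$ collapse modulo $E_i$, and verifying that the surviving factor $e_iD$ is a unit modulo $E_i$ — this is exactly where characteristic zero enters. The remaining points, namely that the determinant's linearity in its first row transports the congruence to the coefficients $G_j$, and the small check that $H(0)$ is nonzero and nonconstant so that Lemma~\ref{2.3.2}(3) applies, are routine.
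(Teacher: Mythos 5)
Your proof is correct and follows essentially the same route as the paper: the paper records the identity $\partial(0)=e_i\bigl(\prod_{j\neq i}E_j\bigr)\partial_{E_i,H(0)}+E_i\partial_i$ and then invokes Statement~3 of Lemma~\ref{2.3.2}, which is exactly your congruence $\partial(0)\equiv e_iD\,\partial_{E_i,H(0)}\pmod{E_i}$ followed by the same appeal to that lemma. Your version merely spells out the details (reduction of $\overline{\nabla}(F(0))$ modulo $E_i$, linearity of the minors, the checks that $e_iD$ is a unit modulo $E_i$ and that $H(0)$ is nonzero) that the paper leaves implicit.
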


\begin{proof} Observe that, if $F(0)=\prod_iE_i^{e_i}$ is the factorization of $F(0)$, then
$$
\partial(0)=e_i(\prod_{j\neq i}^n E_j)\partial_{E_i,H(0)} + E_i\partial_i,
$$
where $\partial_i$ is a derivation. So, if $\gcd(E_i,H(0))=1$,
Lemma~\ref{2.3.2} yields $\gcd(\partial(0),E_i)=1$ for each $i$.
\end{proof}

Let $F(t):=\sum_{i \geq 0}F_it^i \in S[[t]]$  be homogeneous of positive
degree with $F_0\neq 0$ and $V \subset S[[t]]$ a nonzero, homogeneous, saturated
$k[[t]]$-submodule of rank $r+1$, where $r$ is a nonnegative integer. In
the next result we will see how to compute $R^0_F(V)$ in the case where $F_0$ has
multiple factors, at least when $F(t)\in S[[t]]$ is a deformation of
$F_0$ along a general direction, more precisely, when
$\gcd(F_0,F_1)=1$, and $V(0)$ is nondegenerate on each component of
the curve $C(0)$ given by $F_0=0$. 

If $F(t)$ is a deformation of $F_0$ along a general direction then
the generic curve $C^\ast$, given by $F^\ast=0$, is geometrically
reduced. This fact is proved below.

\begin{proposition}\label{3.3.2} Let $k$ be an algebraically closed field of 
characteristic zero, and $S:=k[X_0,X_1,X_2]$. 
Let $F(t):=\sum_{i \geq 0}F_it^i \in S[[t]]$ 
be a homogeneous power
series of positive degree with $F_0\neq 0$, and $C(t)$ the family of
plane curves it defines. Write
$$
F_0=\prod_{i=1}^mE_i^{e_i},
$$
where $E_1,...,E_m$ are the irreducible factors of $F_0$. Assume that
$\gcd(F_0,F_1)=1$. Then the
generic curve $C^\ast$ is geometrically
reduced. Furthermore, let $V(t) \subset S[[t]]$ be a nonzero, homogeneous, 
saturated $k[[t]]$-submodule of rank $r+1$, for 
$r\geq 0$. Assume that 
$V(0)$ is nondegenerate on each component of $C(0)$. Then 
$V^\ast$ is nondegenerate on each geometric component of
$C^\ast$, the generic ramification scheme $R_{F^\ast}(V^\ast)$ is finite, 
and the $0$-cycle of its limit $[R_F^0(V)]$ 
in $\mathbb{P}^2_k$ satisfies:
$$
[R_F^0(V)]= \sum_ie_i[R_{E_i}(V(0))] 
+\binom {r+1} {2}\sum_{i<j}(e_i+e_j)[E_i\cdot E_j]\\
+\binom {r+1} {2}\sum_i(e_i-1)[E_i\cdot F_1],
$$
where $R_{E_i}(V(0))$ is the ramification scheme of the linear 
system induced by $V(0)$ on the curve given by $E_i=0$ for each $i=1,\dots,m$.
\end{proposition}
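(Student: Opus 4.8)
The plan is to run the whole computation through the reduced $F(t)$-derivation $\partial(t)$ of Subsection~\ref{3.3}, exploiting that, by Lemma~\ref{3.3.1}, the hypothesis $\gcd(F_0,F_1)=1$ yields $\gcd(\partial(0),F_0)=1$. This coprimality is precisely what cures the defect flagged in Remark~\ref{3.2.1}: it will force the limit Wronskian $W_{\partial(0)}(V(0))$ to be prime to $F_0$, so that the relevant intersection cycles meet properly in the limit. Throughout I would write $H(t):=(F(t)-F_0)/t$, so that $H(0)=F_1$, and $D:=\prod_iE_i^{e_i-1}$, so that $\nabla(F_0)=D\,\overline{\nabla}(F_0)$.

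First the preliminary claims. To see that $C^\ast$ is geometrically reduced I would argue locally at a general point $P$ of a component $E_i$: choosing a local equation $x$ for $E_i$, one has $F(t)=x^{e_i}u+tF_1+\cdots$ with $u$ and $F_1$ units at $P$, the latter because $\gcd(F_0,F_1)=1$ forces $E_i\nmid F_1$. Hence $F^\ast$ factors near $P$ into $e_i$ distinct reduced branches (roughly $x\sim\zeta(-t)^{1/e_i}$), so $F^\ast$ is square-free at the generic point of each of its components, i.e. $C^\ast$ is geometrically reduced. Nondegeneracy of $V^\ast$ on each geometric component is then immediate: a hypothetical nonzero element of $V^\ast$ vanishing on a component $G$, scaled to a primitive $P(t)\in V(t)$ and reduced modulo $t$, would give a nonzero $P(0)\in V(0)$ divisible by the specialization $E_i$ of $G$, contradicting the nondegeneracy of $V(0)$ on $E_i$. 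With $F^\ast$ square-free and $V^\ast$ nondegenerate on every geometric component, Proposition~\ref{2.3.1} gives finiteness of $R_{F^\ast}(V^\ast)$.

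The heart of the argument is an exact identity of Cartier divisors on $C^\ast$. A single row operation in the defining determinant (replacing the row $\nabla(F^\ast)$ by $\nabla(F^\ast)-t\nabla(H^\ast)=\nabla(F_0)=D\,\overline{\nabla}(F_0)$) gives $\partial_{F^\ast,H^\ast}=D\,\partial(t)$, whence $W_{\partial_{F^\ast,H^\ast}}(V^\ast)=D^{\binom{r+1}{2}}W_{\partial(t)}(V^\ast)$ by Wronskian property~(1). Since $H(0)=F_1$ is prime to $F_0$, $H^\ast$ is prime to $F^\ast$, so applying Lemma~\ref{2.3.2}(2) over $k((t))$ with $Q=H^\ast$ and substituting the previous relation yields
\[
R_{F^\ast}(V^\ast)=(W_{\partial(t)}(V^\ast)\cdot F^\ast)+\binom{r+1}{2}(D\cdot F^\ast)-\binom{r+1}{2}(H^\ast\cdot F^\ast).
\]
It then remains to pass to the limit in each of the three terms.

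For the first term I would use that $\partial(0)$ is an $E_i$-derivation with $\gcd(\partial(0),E_i)=1$ for every $i$; Lemma~\ref{2.3.2}(4) applied to each square-free $E_i$ (on which $V(0)$ is nondegenerate) shows $W_{\partial(0)}(V(0))$ is prime to $F_0$, so the families $W_{\partial(t)}(V)\to W_{\partial(0)}(V(0))$ and $F^\ast\to F_0$ meet properly and $\lim(W_{\partial(t)}(V^\ast)\cdot F^\ast)=\sum_ie_i[W_{\partial(0)}(V(0))\cdot E_i]$. On each $E_i$ I would then reduce modulo $E_i$: from the proof of Lemma~\ref{3.3.1}, $\partial(0)\equiv e_i\bigl(\prod_{j\neq i}E_j\bigr)\partial_{E_i,F_1}\pmod{E_i}$, so property~(1) gives $[W_{\partial(0)}(V(0))\cdot E_i]=\binom{r+1}{2}\sum_{j\neq i}[E_j\cdot E_i]+[W_{\partial_{E_i,F_1}}(V(0))\cdot E_i]$, and Lemma~\ref{2.3.2}(2) on $E_i$ converts the last term into $[R_{E_i}(V(0))]+\binom{r+1}{2}[F_1\cdot E_i]$. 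The third term is a proper limit, $\lim(H^\ast\cdot F^\ast)=[F_1\cdot F_0]=\sum_ie_i[F_1\cdot E_i]$. The main obstacle is the second term, which is \emph{improper}: since $E_i\subset\{F_0=0\}$, the cycles $E_i$ and $F^\ast$ do not meet properly in the limit, so one cannot simply specialize. Here I would compute $\lim(E_i\cdot F^\ast)$ by restriction to $E_i$: there $F^\ast|_{E_i}=t\bigl(F_1|_{E_i}+O(t)\bigr)$ with $F_1|_{E_i}\neq0$, so the zero-divisors converge to that of $F_1|_{E_i}$, giving $\lim(E_i\cdot F^\ast)=[E_i\cdot F_1]$ and hence $\lim(D\cdot F^\ast)=\sum_i(e_i-1)[E_i\cdot F_1]$. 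Substituting the three limits, collecting the coefficient of $[F_1\cdot E_i]$ as $\binom{r+1}{2}(e_i+(e_i-1)-e_i)=\binom{r+1}{2}(e_i-1)$, and rewriting $\sum_i\sum_{j\neq i}e_i[E_i\cdot E_j]=\sum_{i<j}(e_i+e_j)[E_i\cdot E_j]$, reproduces the asserted formula.
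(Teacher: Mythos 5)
Your proof is correct, and the core computation --- rewriting $\partial_{F^\ast,H^\ast}=D\,\partial(t)$ with $D=\prod_iE_i^{e_i-1}$ by a row operation, applying Lemma~\ref{2.3.2}(2) over $k((t))$ with $Q=H^\ast$, and then passing to the limit term by term, with the improper term $(D\cdot F^\ast)$ handled by restricting to each $E_i$ and factoring out $t$ so that $\lim_{t\to 0}(E_i\cdot F^\ast)=[E_i\cdot F_1]$ --- is exactly the paper's argument, down to the final bookkeeping of coefficients. The one genuine divergence is in the preliminaries: you prove geometric reducedness of $C^\ast$ by a direct Newton--polygon analysis of the branches over a general point of each $E_i$, prove nondegeneracy of $V^\ast$ by specializing a primitive element of $V(t)$, and only then invoke Proposition~\ref{2.3.1} to conclude that $R_{F^\ast}(V^\ast)$ is finite; the paper runs the logic in the opposite direction, first showing $\gcd(W_{\partial(0)}(V(0)),E_i)=1$ for each $i$ (hence $\gcd(W_{\partial^\ast}(V^\ast),F^\ast)=1$), deducing finiteness from Lemma~\ref{2.3.2}(2), and then reading off both reducedness and nondegeneracy from Proposition~\ref{2.3.1}. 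Your route costs some extra local analysis but makes the geometry of the degeneration explicit; the paper's is more economical because the coprimality of the limit Wronskian, which you need anyway to take the limit of the first term, already delivers all three preliminary claims at once.
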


\begin{proof} We may assume $V(t)$ is given. (One could let
  $V(t):=V[[t]]$ for a one-dimensional linear system $V$ generated by
  a homogeneous polynomial prime to $F_0$.) Let
$$
H(t):=(F(t)-F_0)/t.
$$ 
We have
\begin{equation}\label{FHE}
\partial_{F(t),H(t)}=\partial_{F(0),H(t)}=
\Big(\prod_{i=1}^m{E_i}^{e_i-1}\Big)\partial(t),
\end{equation}
where $\partial(t)$ is the reduced $F(t)$-derivation. In 
addition, for each $E_i$, as pointed out in the proof of Lemma~\ref{3.3.1},
\begin{equation}\label{3.3.2.3}
\partial(0)=e_i(\prod_{j\neq i} E_j)\partial_{E_i,F_1} + E_i\partial_i,
\end{equation}
where $\partial_i$ is a derivation. Since, by hypothesis, each $E_i$
does not divide either $F_1$ or a nonzero polynomial of $V(0)$, we
have $\gcd(W_{\partial(0)}(V(0)), E_i)=1$ by Proposition~\ref{2.3.1}
and Lemma~\ref{2.3.2}. It follows that
$\gcd(W_{\partial^\ast}(V^\ast),F^\ast)=1$. Furthermore, since
$\gcd(F_0,F_1)=1$, also $\gcd(F^\ast,H^\ast)=1$ and 
$\gcd(W_{\partial_{F^\ast,H^\ast}}(V^\ast),F^\ast)=1$. It follows now
from Lemma~\ref{2.3.2} that $R_{F^\ast}(V^\ast)$ is finite, and thus,
by Proposition~\ref{2.3.1}, that 
$C^\ast$ is geometrically reduced and
$V^\ast$ is nondegenerate on each geometric component of $C^\ast$.

By Lemma~\ref{2.3.2},
$$
(W_{\partial_{F^\ast,H^\ast}}(V^\ast)\cdot F^\ast) = R_{F^\ast}(V^\ast) + \binom {r+1} {2} (H^\ast\cdot F^\ast).
$$
From Expression~\eqref{FHE},
\begin{equation}\label{3.3.2.1}
R_{F^\ast}(V^\ast)= (W_{\partial^\ast}(V^\ast) \cdot F^\ast) + \binom {r+1} {2}\Big( \sum_i (e_i - 1)(E_i^\ast \cdot F^\ast) - (F^\ast \cdot H^\ast)\Big).
\end{equation}
Now, since $H(0)=F_1$,
$$
[\lim_{t \to 0}(E^\ast_i\cdot F^\ast)]=[E_i \cdot F_1] \ \ \textrm{ and } \ \ [\lim_{t \to 0}(F^\ast \cdot H^\ast)] = \sum_i e_i[E_i\cdot F_1].
$$
So, it follows from \eqref{3.3.2.1} that
\begin{equation}\label{3.3.2.2}
[R_{F}^0(V)] = [\lim_{t \to 0}(W_{\partial^\ast}(V^\ast)\cdot F^\ast)] - \binom {r+1} {2}\sum_i[E_i \cdot F_1].
\end{equation}

Now, since $\gcd(W_{\partial(0)}(V(0)), E_i)=1$ for each $E_i$, we
have
$$
[\lim_{t \to 0}(W_{\partial^\ast}(V^\ast)\cdot F^\ast)]= [W_{\partial(0)}(V(0))\cdot F(0)]= \sum_i e_i[W_{\partial(0)}(V(0))\cdot E_i].
$$
Using Formula~\eqref{3.3.2.3} and Lemma~\ref{2.3.2} we get
$$
[W_{\partial(0)}(V(0))\cdot E_i]=\binom {r+1} {2}\Big(\sum_{j \neq i}[E_j\cdot E_i] + [E_i\cdot F_1]  \Big) + [R_{E_i}(V(0))].
$$
Thus
$$
[\lim_{t \to 0}(W_{\partial^\ast}(V^\ast)\cdot F^\ast)] =\sum_i e_i[R_{E_i}(V(0))] + \binom {r+1} {2}\Big(\sum_i e_i[E_i\cdot F_1] + \sum_{i < j}(e_j+e_i)[E_i\cdot E_j]\Big).
$$
Combining the above expression with \eqref{3.3.2.2}, 
we get the desired expression for $[R_F^0(V)]$. 
\end{proof}

\section{Adaptations}
\label{section:adaptions}

Let $k$ be an algebraically closed field of characteristic zero and 
$S:=k[X_0,X_1,X_2]$.
Let $F(t) \in S[[t]]$ be a homogeneous power series with positive degree and nonzero constant coefficient $F(0)$. Let $V(t) \subset S[[t]]$ be a homogeneous, saturated $k[[t]]$-submodule of rank $r+1$ for some nonnegative integer $r$. 

If the generic ramification scheme $R_{F^\ast}(V^\ast)$ is finite, we
would like to compute its limit $0$-cycle $[R_F^0(V)]$ in
$\mathbb{P}^2_k$. As we saw in Lemma~\ref{2.3.2}, we can choose a
$F(t)$-derivation $\partial(t)$ such that $\gcd(W_{\partial^\ast}(V^\ast),
F^\ast)=1$ and this allows us to compute
$R_{F^\ast}(V^\ast)$. However, as pointed out in Remark~\ref{3.2.1},
the schematic boundary of $\big(W_{\partial^\ast}(V^\ast)\cdot
F^\ast\big)$ will not necessarily be $\big(W_{\partial(0)}(V(0))\cdot F(0)\big)$.

To remedy this we will consider modified derivations adapted to each factor of $F(0)$.

Indeed, to compute the limit of the ramification scheme
$R_{F^\ast}(V^\ast)$ in $\mathbb{P}^2_k$, we may change $\partial(t)$ to
any $F(t)$-derivation $\partial_1(t)$ such that the induced
$k((t))$-derivations $\partial^\ast$ and $\partial_1^\ast$ of $S((t))$
are projectively equivalent modulo $F^\ast$. The change is allowed because, by Proposition~\ref{2.2.2},
$$
W_{\partial_1^\ast}(V^\ast) \equiv_{F^\ast} c W_{\partial^\ast}(V^\ast),
$$
for some $c \in k((t))-\{0\}$.

We will actually consider something slightly more general, and for this we make the definitions below.

\begin{definition}\label{4.1.1}\setcounter{equation}{0} 
Let $F(t) \in S[[t]]$ be a homogeneous power series of positive degree
and nonzero constant coefficient $F(0)$. Let $E$ be an irreducible
factor of $F(0)$ and $\partial(t)$ a $F(t)$-derivation. We say that
$\partial(t)$ is \textit{adapted} to $E$ if $\gcd(\partial(0),E)=1$. We
say that a $F(t)$-derivation $\partial_1(t)$ is an \textit{adaptation} of
$\partial(t)$ to $E$ if $\partial_1(t)$ is adapted to $E$ and there is a
homogeneous power series $G(t) \in S[[t]]$ such that $\gcd(G(0),E)=1$ and
$\partial_1^\ast \equiv_{F^\ast} G^\ast \partial^\ast$.
\end{definition}

We do not know when such adaptations exist in general. 
But when they do, we may compute the limit $0$-cycle 
$[R_F^0(V)]$ using Theorem~\ref{4.1.4}, which is a 
simple consequence of Proposition~\ref{4.1.2} below.

\begin{proposition}\label{4.1.2} 
Let $k$ be an algebraically closed field. Let $F(t),G(t) \in S[[t]]$
be homogeneous power series of positive degree with $F(0)\neq 0$. Let $E_1,...,E_m$ be the irreducible factors of $F(0)$ and $e_1,...,e_m$ their respective multiplicities. Assume that, for each $i=1,...,m$, there are homogeneous power series $L_i(t),M_i(t) \in S[[t]]$ such that:
\begin{enumerate}
\item $L_i^\ast G^\ast$ is projectively equivalent to $M_i^\ast$ modulo $F^\ast$ in $S((t))$;
\item $L_i(0)M_i(0)$ is prime to $E_i$.
\end{enumerate}
Then $F^\ast$ and $G^\ast$ are coprime in $S((t))$ and
$$
[\lim_{t \to 0}(G^\ast \cdot F^\ast)]= \sum_{i=1}^m e_i \Big([M_i(0)\cdot E_i] - [L_i(0)\cdot E_i] \Big).
$$
\end{proposition}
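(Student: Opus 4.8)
The plan is to establish the two assertions in order — first that $F^*$ and $G^*$ are coprime, then the limit formula — reducing the limit computation to the ``good'' specialization situation already exploited in the proof of Proposition~\ref{3.3.2}, and finally invoking the limit formula of \cite{E2} to glue the local contributions together.

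\emph{Coprimality.} I would argue by contradiction in the UFD $S((t))$. Suppose $F^*$ and $G^*$ had a common irreducible factor; clearing denominators I may take it to be a primitive $D\in S[[t]]$ of positive degree in the $X_j$ dividing both $F$ and $G$ in $S[[t]]$. Then $D(0)$ is a nonconstant divisor of $F(0)=\prod_i E_i^{e_i}$, so $E_{i_0}\mid D(0)$ for some $i_0$. By hypothesis~(1) we may write $L_{i_0}^* G^* = a\,M_{i_0}^* + A F^*$ with $a\in k((t))-\{0\}$; since $D$ divides $G^*$ and $F^*$, it divides $a M_{i_0}^*$, hence $M_{i_0}^*$, and by primitivity $D(0)\mid M_{i_0}(0)$. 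Thus $E_{i_0}\mid M_{i_0}(0)$, contradicting hypothesis~(2). Hence $F^*$ and $G^*$ are coprime and $(G^*\cdot F^*)$ is a genuine $0$-cycle.

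\emph{The engine.} The whole computation rests on one specialization fact, the same one underlying Proposition~\ref{3.3.2}: if $Q(t)\in S[[t]]$ is homogeneous and $Q(0)$ is prime to $F(0)$ in a neighborhood of a point $P$, then near $P$ the subscheme $(Q(t)\cdot F(t))\subset\mathbb P^2_{k[[t]]}$ is finite flat over $k[[t]]$ — its fibres are zero-dimensional and it is a complete intersection, so $t$ is a nonzerodivisor — whence its flat limit is its special fibre and, near $P$,
\[
[\lim_{t\to0}(Q^*\cdot F^*)] = [Q(0)\cdot F(0)] = \sum_i e_i\,[Q(0)\cdot E_i].
\]
Now fix $i$. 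On $C^*$ hypothesis~(1) gives the equality of sections $L_i^* G^* = a\,M_i^*$ modulo $F^*$, hence the equality of effective Cartier divisors $(L_i^*\cdot F^*)+(G^*\cdot F^*)=(M_i^*\cdot F^*)$, valid wherever $L_i^*$ and $M_i^*$ are prime to $F^*$. In particular it holds over any neighborhood of a point $P$ lying on $\Gamma_i:=\{E_i=0\}$ and on no other component, because there $L_i(0)$ and $M_i(0)$, being prime to $E_i$ by~(2), are prime to all of $F(0)$. Taking limits there and applying the engine to $Q=M_i$ and $Q=L_i$ yields, near such $P$,
\[
[\lim_{t\to0}(G^*\cdot F^*)] = e_i\big([M_i(0)\cdot E_i] - [L_i(0)\cdot E_i]\big),
\]
which is exactly the claimed contribution, all other summands vanishing near $P$ since $E_j(P)\neq0$ for $j\neq i$.

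\emph{Assembly, and the main obstacle.} It remains to account for the points $P$ lying on several components $\Gamma_i$, and this is the crux. There the argument above breaks down: near such $P$ the polynomial $M_i(0)$ is guaranteed prime only to $E_i$, not to the other local factors $E_j$ of $F(0)$, so neither $(M_i^*\cdot F^*)$ nor $(L_i^*\cdot F^*)$ is finite near $P$ and the engine cannot be applied to any single relation. To resolve the entanglement I would invoke the general formula for limits of Cartier divisors of \cite{E2}, which expresses $[\lim_{t\to0}(G^*\cdot F^*)]$ as a sum of contributions read off one component $\Gamma_i$ at a time, each weighted by the multiplicity $e_i$ and each computed from any representative of the divisor $(G^*\cdot F^*)$ that is regular and invertible at the generic point of $\Gamma_i$. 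Hypotheses~(1) and~(2) furnish precisely such a representative along $\Gamma_i$, namely $M_i/L_i$ — equal to $G^*$ up to the unit $a$ modulo $F^*$ — whose specialization to $\Gamma_i$ contributes $e_i([M_i(0)\cdot E_i]-[L_i(0)\cdot E_i])$. Summing over $i$ gives the stated formula. The delicate point to verify is thus that the \cite{E2} formula genuinely localizes to the generic points of the $\Gamma_i$, so that the behavior at the multiple points of $F(0)=0$ is dictated component by component and the single-component computation above suffices.
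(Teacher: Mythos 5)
Your proposal follows essentially the same route as the paper: coprimality is extracted directly from the projective--equivalence relation together with hypothesis (2), and the limit formula is obtained by feeding that relation into the component-by-component limit formula of \cite{E2}, Thm.~4.1. The one step you flag but do not carry out --- that the formula of \cite{E2} localizes at the generic points of the components $E_i=0$ --- is precisely what the paper's proof sets up, by writing $L_i(t)G(t)=t^{p_i}r_i(t)M_i(t)+t^{m_i}A_i(t)F(t)$ with $p_i,m_i\geq 0$ and deducing the relation $\mathcal D+\mathcal H_i=p_iC(0)+\mathcal K_i$ between effective Cartier divisors on the total family, with $\mathcal H_i,\mathcal K_i$ avoiding the generic point $\xi_i$; this is a routine verification given your setup.
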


\begin{proof} We may assume $G(0)\neq 0$. Also, we may work with an
  irreducible factor of $F(t)$ at a time, so we may assume $F(t)$ is irreducible.

We prove first that each of $G^\ast$, $L_i^\ast$ and
  $M_i^\ast$ for $i=1,\dots,m$ is coprime with $F^\ast$ in
  $S((t))$. Indeed, for each $i=1,\dots,m$, it follows from (1) that 
there are a homogeneous $A_i(t)\in S[[t]]$ with
  $A_i(0)\neq 0$, a power series $r_i(t)\in k[[t]]$ with $r_i(0)\neq 0$, and integers $m_i$ and $p_i$ such
 \begin{equation}\label{LGMF}
L_i(t)G(t)=t^{p_i}r_i(t)M_i(t)+t^{m_i}A_i(t)F(t).
\end{equation} 
If $p_i<0$, since $A_i(0)F(0)M_i(0)\neq 0$, we would have $m_i=p_i$. But
then $r_i(0)M_i(0)=-A_i(0)F(0)$ and thus $E_i$ would divide $M_i(0)$,
contradicting (2). Thus $p_i\geq 0$. Since $A_i(0)F(0)\neq 0$, also
$m_i\geq 0$. 

If $G^\ast$ and $F^\ast$ had a nontrivial common factor in
 $S((t))$, then $G(t)$ and $F(t)$ would have a common
 factor of positive degree in $S[[t]]$. Since $F(t)$ is irreducible,
 it would follow that $F(t)|G(t)$. But then it would follow from \eqref{LGMF} that $F(t)$ would
 divide $M_i(t)$ for each $i$, and hence $E_i|M_i(0)$, contradicting (2). 

Similarly, we show that $L_i^\ast$ and
  $M_i^\ast$ are coprime with $F^\ast$ in
  $S((t))$ for each $i=1,\dots,m$.

Let $B$ be the spectrum of $k[[t]]$ and let
  $\mathcal C\subset\mathbb P^2_B$ be the subscheme cut out by $F(t)=0$. Let
  $\pi\colon \mathcal C\to B$ be the projection. Then $\pi$ is flat, with 
  special fiber $C(0)$ of pure dimension 1. Let $\mathcal D$ be the subscheme
  cut out by $G(t)=0$ on $\mathcal C$. It is an effective Cartier
  divisor because $F^\ast$ and
  $G^\ast$ are coprime in $S((t))$. Similarly, the subschemes
  $\mathcal H_i$
  and $\mathcal K_i$ of $\mathcal C$ cut out by $L_i(t)=0$ and
  $M_i(t)=0$, respectively, are
  effective Cartier divisors for $i=1,\dots,m$.

For each $i=1,\dots,m$, let $\xi_i$ be the generic point
  of the irreducible primary subscheme of the special fiber $C(0)$ cut out by
  $E_i^{e_i}=0$. It follows from Equation~\eqref{LGMF} that
  $\mathcal D+\mathcal H_i=p_iC(0)+\mathcal K_i$ for $i=1,\dots,m$. 
And it follows from (2) that 
  $\xi_i\not\in\mathcal H_i+\mathcal K_i$ for $i=1,\dots,m$. 
Apply now \cite{E2}, Thm.~4.1, p.~1722.
\end{proof}

\begin{theorem}\label{4.1.4} 
Let $k$ be an algebraically closed field of characteristic zero. Let
$F(t) \in S[[t]]$ be a homogeneous power series of positive degree and
nonzero constant coefficient $F(0)$. Let 
$\partial(t)$ be a $F(t)$-derivation. Let $E_1,...,E_m$ be the irreducible
factors of $F(0)$ and $e_1,...,e_m$ their multiplicities. Let 
$V(t)\subset S[[t]]$ be a homogeneous, saturated $k[[t]]$-submodule of rank
$r+1$ for $r\geq 0$. Assume that, for each
$i=1,...,m$, the system induced by $V(0)$ on the curve given by $E_i=0$ is
nondegenerate. Assume as well that there are a positive integer $p$, 
and homogeneous power series $H_i(t),K_i(t) \in S[[t]]$ and an $E_i$-adapted
$F(t)$-derivation $\partial_i(t)$ for each $i=1,\dots,m$ such that:
\begin{enumerate}
\item $\partial_i^\ast \equiv_{F^\ast}H_i^\ast \partial^\ast$ in $S((t))$;
\item ${H_i^\ast}^p$ is projectively equivalent to $K_i^\ast$
  module $F^\ast$ in $S((t))$
\item $K_i(0)$ is prime to $E_i$.
\end{enumerate}
Then $W_{\partial^\ast}(V^\ast)$ and $F^\ast$ are coprime in $S((t))$ and
$$
[\lim_{t \to 0}(W_{\partial^\ast}(V^\ast)\cdot F^\ast)]= \sum_{i=1}^m e_i[W_{\partial_i(0)}(V(0))\cdot E_i] - 1/p\binom {r+1} {2}\sum_{i=1}^n e_i[K_i(0)\cdot E_i].
$$
\end{theorem}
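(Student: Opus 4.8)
\section*{Proof proposal}

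The plan is to deduce the theorem from Proposition~\ref{4.1.2} applied not to $W_{\partial^\ast}(V^\ast)$ itself but to its $p$-th power; the division by $p$ in the stated formula is precisely what this maneuver produces. Throughout I write $N:=\binom{r+1}{2}$ and $G^\ast:=W_{\partial^\ast}(V^\ast)$, and I lift $G^\ast$ to $G(t):=W_{\partial(t)}(V(t))\in S[[t]]$ computed from a fixed $k[[t]]$-basis of $V(t)$, so that $G(0)=W_{\partial(0)}(V(0))$. The goal is then to manufacture, for each $i$, homogeneous power series $L_i(t),M_i(t)$ verifying hypotheses (1)--(2) of Proposition~\ref{4.1.2} with $G^\ast$ there replaced by $(G^\ast)^p$.

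\emph{The key algebraic identity.} First I would convert hypothesis (1), $\partial_i^\ast \equiv_{F^\ast} H_i^\ast \partial^\ast$, into a statement about Wronskians. Since both $\partial_i^\ast$ and $H_i^\ast\partial^\ast$ are $F^\ast$-derivations, the proof of Proposition~\ref{2.2.2} furnishes $a_i\in k((t))^\ast$ with $W_{\partial_i^\ast}(V^\ast)\equiv_{F^\ast} a_i\,W_{H_i^\ast\partial^\ast}(V^\ast)$, and property~(1) of the Wronskian gives $W_{H_i^\ast\partial^\ast}(V^\ast)=(H_i^\ast)^N G^\ast$. Hence $W_{\partial_i^\ast}(V^\ast)\equiv_{F^\ast} a_i (H_i^\ast)^N G^\ast$. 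Raising this to the $p$-th power (projective equivalence modulo $F^\ast$ being multiplicative) and substituting hypothesis~(2) in the form $(H_i^\ast)^{Np}=\big((H_i^\ast)^p\big)^N\equiv_{F^\ast} b_i^N (K_i^\ast)^N$ for suitable $b_i\in k((t))^\ast$, I obtain
$$
(K_i^\ast)^N (G^\ast)^p \equiv_{F^\ast} c_i\,\big(W_{\partial_i^\ast}(V^\ast)\big)^p
$$
for some $c_i\in k((t))^\ast$. This dictates the choices $L_i(t):=K_i(t)^N$ and $M_i(t):=\big(W_{\partial_i(t)}(V(t))\big)^p$, for which condition~(1) of Proposition~\ref{4.1.2} (with $(G^\ast)^p$ in place of $G^\ast$) holds, with constant coefficients $L_i(0)=K_i(0)^N$ and $M_i(0)=\big(W_{\partial_i(0)}(V(0))\big)^p$.

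\emph{Verifying primality, the main obstacle.} The substantive step is condition~(2) of Proposition~\ref{4.1.2}: that $L_i(0)M_i(0)=K_i(0)^N\big(W_{\partial_i(0)}(V(0))\big)^p$ be prime to $E_i$. Primality of $K_i(0)$ is hypothesis~(3); the real content is $\gcd\big(W_{\partial_i(0)}(V(0)),E_i\big)=1$, and this is where $E_i$-adaptedness and the nondegeneracy of $V(0)$ enter. I would first observe that $\partial_i(0)$ is automatically an $E_i$-derivation: evaluating $\partial_i(t)(F(t))=F(t)Q(t)$ at $t=0$ shows $\partial_i(0)$ is an $F(0)$-derivation, and writing $F(0)=E_i^{e_i}G$ with $\gcd(E_i,G)=1$, the divisibility $E_i^{e_i}\mid\partial_i(0)(F(0))$ forces $E_i\mid e_i\,\partial_i(0)(E_i)\,G$, whence $E_i\mid\partial_i(0)(E_i)$ since we are in characteristic zero. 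Now $E_i$ is irreducible, hence square-free and geometrically irreducible over the algebraically closed $k$; $\partial_i(0)$ is an $E_i$-derivation with $\gcd(\partial_i(0),E_i)=1$ by $E_i$-adaptedness; and $V(0)$ is nondegenerate on $E_i=0$ by assumption. Lemma~\ref{2.3.2}(4) then yields $\gcd\big(W_{\partial_i(0)}(V(0)),E_i\big)=1$, as required.

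\emph{Conclusion.} With (1)--(2) verified, Proposition~\ref{4.1.2} applied to $(G^\ast)^p$ shows that $F^\ast$ and $(G^\ast)^p$ are coprime, hence so are $F^\ast$ and $G^\ast=W_{\partial^\ast}(V^\ast)$, and
$$
[\lim_{t\to 0}((G^\ast)^p\cdot F^\ast)]=\sum_{i=1}^m e_i\big([M_i(0)\cdot E_i]-[L_i(0)\cdot E_i]\big).
$$
Finally I would simplify using additivity of intersection with $E_i$ under products, namely $[M_i(0)\cdot E_i]=p\,[W_{\partial_i(0)}(V(0))\cdot E_i]$ and $[L_i(0)\cdot E_i]=N\,[K_i(0)\cdot E_i]$, together with $[\lim_{t\to 0}((G^\ast)^p\cdot F^\ast)]=p\,[\lim_{t\to 0}(G^\ast\cdot F^\ast)]$, since $((G^\ast)^p\cdot F^\ast)=p\,(G^\ast\cdot F^\ast)$ as divisors on $C^\ast$ and passing to the limit is linear. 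Dividing by $p$ reproduces exactly the asserted formula. I expect the only delicate point to be the primality verification above, and within it the observation that an $F(0)$-derivation restricts to an $E_i$-derivation, which is what makes Lemma~\ref{2.3.2}(4) applicable.
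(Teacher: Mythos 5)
Your proof is correct and takes essentially the same approach as the paper: the paper's own proof consists exactly of setting $G(t):=W_{\partial(t)}(V(t))^p$, $L_i(t):=K_i(t)^{\binom{r+1}{2}}$ and $M_i(t):=W_{\partial_i(t)}(V(t))^p$, applying Proposition~\ref{4.1.2}, and dividing by $p$. The verifications you supply --- the Wronskian identity via Proposition~\ref{2.2.2} and property (1) of Wronskians, and the coprimality of $W_{\partial_i(0)}(V(0))$ with $E_i$ via adaptedness, nondegeneracy and Lemma~\ref{2.3.2}(4) --- are precisely the details the paper leaves to the reader, and they are carried out correctly.
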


\begin{proof} Set $G(t):=W_{\partial(t)}(V(t))^p$. Also, let 
$L_i(t):=K_i(t)^{\binom {r+1} {2}}$ and $M_i(t):=W_{\partial_i(t)}(V(t))^p$ for
$i=1,\dots,n$. Apply Proposition~\ref{4.1.2} and divide the resulting
equation by $p$.
\end{proof}

\section{Degenerations along a quasi-general direction}
\label{example}

\begin{theorem}\label{5} Let $k$ be an algebraically closed field of 
characteristic zero, and $S:=k[X_0,X_1,X_2]$.
Let $F(t):=\sum F_it^i \in S[[t]]$ be a homogeneous power
series of positive degree with 
$F_0\neq 0$, and $C(t)$ the family of plane curves it defines. Write
$$
F_0=\prod_{i=1}^mE_i^{e_i},
$$
where $E_1,...,E_m$ are the irreducible factors of $F_0$. 
Assume that $\gcd(E_i,F_1)=1$ for each $i$
such that $e_i>1$. Then the
generic curve $C^\ast$ is geometrically
reduced. Furthermore, let $V(t) \subset S[[t]]$ be a nonzero, homogeneous, 
saturated $k[[t]]$-submodule of rank $r+1$, for 
$r\geq 0$. Assume  that $V(0)$ is nondegenerate on each
component of $C(0)$. Then $V^\ast$ is nondegenerate on each geometric component of
$C^\ast$, the generic ramification scheme $R_{F^\ast}(V^\ast)$ is finite, 
and the $0$-cycle of its limit $[R_F^0(V)]$ 
in $\mathbb{P}^2_k$ satisfies:
$$
[R_F^0(V)]= \sum_ie_i[R_{E_i}(V(0))] 
+\binom {r+1} {2}\sum_{i<j}(e_i+e_j)[E_i\cdot E_j]
+\binom {r+1} {2}\sum_i(e_i-1)[E_i\cdot F_1].
$$
where $R_{E_i}(V(0))$ is the ramification scheme of the linear 
system induced by $V(0)$ on the curve given by $E_i=0$ for each $i=1,\dots,m$.
\end{theorem}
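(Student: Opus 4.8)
The plan is to run the argument of Proposition~\ref{3.3.2} as far as it goes and then repair it on the finitely many components where it breaks, namely the simple factors $E_i$ of $F_0$ that divide $F_1$. Set $H(t):=(F(t)-F_0)/t$ and let $\partial(t)$ denote the reduced $F(t)$-derivation, so that \eqref{FHE} holds; it will play the role of the main derivation $\partial$ in Theorem~\ref{4.1.4}. Granting the genericity statements established below, the computation leading to \eqref{3.3.2.1} is unchanged and gives
\[
R_{F^\ast}(V^\ast)=(W_{\partial^\ast}(V^\ast)\cdot F^\ast)+\binom{r+1}{2}\Big(\sum_i(e_i-1)(E_i^\ast\cdot F^\ast)-(H^\ast\cdot F^\ast)\Big).
\]
The key simplification available here is that $tH^\ast=F^\ast-F_0$ forces $H^\ast\equiv_{F^\ast}F_0=\prod_lE_l^{e_l}$, so on the generic curve $(H^\ast\cdot F^\ast)=\sum_le_l(E_l^\ast\cdot F^\ast)$ and the displayed formula collapses to
\[
R_{F^\ast}(V^\ast)=(W_{\partial^\ast}(V^\ast)\cdot F^\ast)-\binom{r+1}{2}\sum_i(E_i^\ast\cdot F^\ast).
\]
Taking limits reduces the theorem to computing $[\lim(W_{\partial^\ast}(V^\ast)\cdot F^\ast)]$ together with the elementary limits $[\lim(E_i^\ast\cdot F^\ast)]$, which equal $[E_i\cdot F_1]$ whenever $E_i\nmid F_1$ (in particular for every multiple factor, by hypothesis) and equal $[E_i\cdot F_{s_i}]$, with $s_i$ the least index with $E_i\nmid F_{s_i}$, otherwise.

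For the genericity input I must show that $C^\ast$ is geometrically reduced, $V^\ast$ is nondegenerate on each geometric component, $R_{F^\ast}(V^\ast)$ is finite, and $F^\ast$ is coprime to each $E_i$. The novelty against Proposition~\ref{3.3.2} is that $\partial(0)$ vanishes on the simple factors $E_i\mid F_1$: by \eqref{3.3.2.3}, there $\partial_{E_i,F_1}=E_i\partial_{E_i,F_1/E_i}$, so $E_i\mid\partial(0)$ and $\gcd(W_{\partial^\ast}(V^\ast),F^\ast)=1$ cannot be checked on those components using $\partial$ alone. I would supplement $\partial$ with the auxiliary $F(t)$-derivation $\partial_{F(t),K}$ for a fixed $K$ prime to $F_0$; since $\partial_{F_0,K}=(\prod_lE_l^{e_l-1})\,\overline{\nabla}$-type derivation and $\prod_lE_l^{e_l-1}$ omits every simple factor, this derivation is adapted to each simple $E_i$. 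Combining it with $\partial$ on the remaining components, Lemma~\ref{2.3.2} and Proposition~\ref{2.3.1} yield the required finiteness and reducedness on all of $C^\ast$.

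The heart of the proof is the evaluation of $[\lim(W_{\partial^\ast}(V^\ast)\cdot F^\ast)]$ through Theorem~\ref{4.1.4}. For each factor with $\gcd(E_i,F_1)=1$ I take the trivial adaptation $\partial_i:=\partial$ and $H_i=K_i:=1$. For a simple factor $E_i\mid F_1$ I must produce an $E_i$-adapted $F(t)$-derivation $\partial_i$ with $\partial_i^\ast\equiv_{F^\ast}H_i^\ast\partial^\ast$ and $H_i(0)$ prime to $E_i$, even though $\partial(0)$ vanishes on $E_i$; this is the main obstacle. I would build $\partial_i$ as a $t$-adic reduction: writing $F_0=E_iA_i$ with $A_i=\prod_{l\neq i}E_l^{e_l}$, and using that $E_i$ divides $F_0,\dots,F_{s_i-1}$, one solves order by order for a homogeneous $G(t)$ with $G(0)=A_i$ and a derivation-valued correction $\Theta(t)$ so that $G\partial^\ast+F^\ast\Theta$ is divisible by $t^{\,s_i-1}$; the quotient is an $F(t)$-derivation $\partial_i$ satisfying $\partial_i^\ast\equiv_{F^\ast}G^\ast\partial^\ast$, so one sets $H_i:=G$ (whence $H_i(0)=A_i$ is prime to $E_i$), $K_i:=H_i$ and $p:=1$. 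An expansion modulo $E_i$ then identifies $\partial_i(0)\equiv_{E_i}c\big(\prod_{l\neq i}E_l^{e_l+1}\big)\partial_{E_i,F_{s_i}}$ for a nonzero scalar $c$, which is prime to $E_i$ precisely because $E_i\nmid F_{s_i}$; verifying this adaptedness, the solvability of the reduction, and the exact powers $e_l+1$ is where the delicate computation lies.

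Finally I would assemble the contributions component by component. For a factor with $\gcd(E_i,F_1)=1$, Theorem~\ref{4.1.4} contributes $e_i[W_{\partial(0)}(V(0))\cdot E_i]=e_i[R_{E_i}(V(0))]+\binom{r+1}{2}e_i\big(\sum_{j\neq i}[E_i\cdot E_j]+[E_i\cdot F_1]\big)$ by \eqref{3.3.2.3} and Lemma~\ref{2.3.2}; subtracting the matching $\binom{r+1}{2}[\lim(E_i^\ast\cdot F^\ast)]=\binom{r+1}{2}[E_i\cdot F_1]$ leaves $e_i[R_{E_i}(V(0))]+\binom{r+1}{2}e_i\sum_{j\neq i}[E_i\cdot E_j]+\binom{r+1}{2}(e_i-1)[E_i\cdot F_1]$. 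For a simple factor $E_i\mid F_1$, the description of $\partial_i(0)$, the identity $W_{c\partial}(V)=c^{\binom{r+1}{2}}W_{\partial}(V)$, and Lemma~\ref{2.3.2} give $[W_{\partial_i(0)}(V(0))\cdot E_i]-\binom{r+1}{2}[K_i(0)\cdot E_i]=[R_{E_i}(V(0))]+\binom{r+1}{2}\sum_{j\neq i}[E_i\cdot E_j]+\binom{r+1}{2}[E_i\cdot F_{s_i}]$, where the $\sum(e_j+1)[E_i\cdot E_j]$ coming from $\partial_i(0)$ is corrected by $[K_i(0)\cdot E_i]=[A_i\cdot E_i]=\sum_{j\neq i}e_j[E_i\cdot E_j]$; subtracting $\binom{r+1}{2}[\lim(E_i^\ast\cdot F^\ast)]=\binom{r+1}{2}[E_i\cdot F_{s_i}]$ then cancels the higher-order term, leaving $[R_{E_i}(V(0))]+\binom{r+1}{2}\sum_{j\neq i}[E_i\cdot E_j]$, which is exactly the $e_i=1$ instance of the previous expression (its $F_1$-term absent since $e_i-1=0$). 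Summing over $i$ and using $\sum_i e_i\sum_{j\neq i}[E_i\cdot E_j]=\sum_{i<j}(e_i+e_j)[E_i\cdot E_j]$ yields the asserted formula. The crux throughout is the construction and adaptedness of the $t$-adic reductions $\partial_i$ on the simple components dividing $F_1$, together with the cancellation of the higher-order intersections $[E_i\cdot F_{s_i}]$ they introduce.
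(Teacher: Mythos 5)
Your route is genuinely different from the paper's, and the difference is instructive. The paper never has to adapt anything on the simple components: its main derivation is $\prod_{e_i>1}E_i\cdot\partial_{F(t),H}$ for a fixed $H$ prime to $F_0$, which is automatically adapted to every simple factor (whether or not it divides $F_1$), and the reduced derivation enters only as the adaptation on the multiple components, where the hypothesis $\gcd(E_i,F_1)=1$ makes it work; Theorem~\ref{4.1.4} is then applied with $H_i=K_i=\prod_{e_j=1}E_j$ and $p=1$. You invert the roles: the reduced derivation is the main one, and the whole difficulty is pushed onto the simple factors dividing $F_1$, where you must manufacture adaptations from scratch. That construction is exactly the step you defer (``where the delicate computation lies''), and it is the heart of your argument, so as written the proposal has a gap. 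The gap is, however, fillable and your guess is right: take $G(t):=\sum_{j<s_i}(F_j/E_i)t^j$ (so $G(0)=A_i$ and all quotients are polynomials since $E_i\mid F_j$ for $j<s_i$) and $\Theta(t):=-\sum_{j=0}^{s_i-2}\Theta_jt^j$, where $E_i\Theta_j$ is the coefficient of $t^j$ in $\partial(t)$, namely the determinant with rows $\overline{\nabla}(F_0)$, $\nabla(F_{j+1})$, $\nabla$, divisible by $E_i$ because $E_i\mid F_{j+1}$. Then the coefficient of $t^n$ in $G\partial+F\Theta$ is $\sum_{a+b=n}(E_iG_a-F_a)\Theta_b=0$ for $n\le s_i-2$, while the coefficient of $t^{s_i-1}$ collapses to $A_i\cdot\det\bigl(\overline{\nabla}(F_0),\nabla(F_{s_i}),\nabla\bigr)\equiv \bigl(\prod_{l\neq i}E_l^{e_l+1}\bigr)\partial_{E_i,F_{s_i}}\bmod E_i$, confirming the claimed congruence, the exponents $e_l+1$, and adaptedness via Lemma~\ref{2.3.2}. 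With that supplied, your bookkeeping (the cancellation of $[E_i\cdot F_{s_i}]$ against $[\lim_{t\to 0}(E_i^\ast\cdot F^\ast)]$ and the correction $[K_i(0)\cdot E_i]=\sum_{l\neq i}e_l[E_l\cdot E_i]$) is correct and reproduces the stated formula.

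Two further points. First, your collapsed identity $(H^\ast\cdot F^\ast)=\sum_le_l(E_l^\ast\cdot F^\ast)$ and the evaluation $[\lim_{t\to 0}(E_i^\ast\cdot F^\ast)]=[E_i\cdot F_{s_i}]$ presuppose $s_i<\infty$ for every $i$; but a \emph{simple} factor $E_i$ may divide every $F_j$, hence $F(t)$ itself, without violating the hypotheses or the conclusion that $C^\ast$ is reduced. In that case $(H^\ast\cdot F^\ast)$ and $(E_i^\ast\cdot F^\ast)$ are undefined and your starting identity breaks down; the paper's choice of main derivation avoids this since it only needs $\gcd(H,F_0)=1$ and $\gcd\bigl(\prod_{e_i>1}E_i,F_1\bigr)=1$. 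You would have to treat such factors separately. Second, the finiteness of $R_{F^\ast}(V^\ast)$ and the reducedness of $C^\ast$ are obtained more cleanly as in the paper, directly from the coprimality conclusion of Theorem~\ref{4.1.4} together with Lemma~\ref{2.3.2}, rather than via your ad hoc auxiliary derivation $\partial_{F(t),K}$.
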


\begin{proof} As in the proof of Proposition~\ref{3.3.2}, we may
  assume $V(t)$ is given. Let $H \in k[X_0,X_1,X_2]$ homogeneous and prime to
  $F_0$. The $F(t)$-derivation 
$$
\partial_1(t):= \begin{vmatrix}
\partial_{X_0}(F(t)) & \partial_{X_1}(F(t)) & \partial_{X_2}(F(t)) \\
\partial_{X_0}(H) & \partial_{X_1}(H) & \partial_{X_2}(H)\\
\partial_{X_0} & \partial_{X_1}  & \partial_{X_2}
\end{vmatrix}
$$
is
adapted to each $E_i$ with $e_i=1$. Furthermore, the reduced $F(t)$-derivation
$$
\partial_2(t):=\frac{1}{E_1^{e_1-1}\cdots E_m^{e_m-1}}\begin{vmatrix}
\partial_{X_0}(F_0) & \partial_{X_1}(F_0) & \partial_{X_2}(F_0) \\
\partial_{X_0}(G(t)) & \partial_{X_1}(G(t)) & \partial_{X_2}(G(t)) \\
\partial_{X_0} & \partial_{X_1}  & \partial_{X_2}
\end{vmatrix},
$$
where $G(t):=(F(t)-F_0)/t$, is adapted to each $E_i$ with $e_i>1$.

We need to compare $\partial_1(t)$ to $\partial_2(t)$ to use
Theorem~\ref{4.1.4}. 
First observe that
$$
t\partial_2(t)=\frac{1}{E_1^{e_1-1}\cdots E_m^{e_m-1}} \begin{vmatrix}
\partial_{X_0}(F_0) & \partial_{X_1}(F_0) & \partial_{X_2}(F_0) \\
\partial_{X_0}(F(t)) & \partial_{X_1}(F(t)) & \partial_{X_2}(F(t)) \\
\partial_{X_0} & \partial_{X_1}  & \partial_{X_2}
\end{vmatrix},
$$
and hence, by Lemma~\ref{2.3.2},
$$
H^\ast\partial_2^\ast \equiv_{F^\ast} E_1\cdots E_m\partial_1^\ast
$$
as $k((t))$-derivations of $k[X_0,X_1,X_2]((t))$. Set
$$
\partial(t):= \prod_{e_i>1}E_i\partial_1(t)\quad\text{and}\quad \partial_3(t):=
H\partial_2(t).
$$
Since $\gcd(F_0,H)=1$, it follows that $\partial_3(t)$ is an adaptation
of $\partial(t)$ to each $E_i$ with $e_i>1$.

Set
$$
A_1:=\prod_{e_i=1}E_i \quad\text{and}\quad A_2:=\prod_{e_i>1}E_i
$$
It follows from Theorem~\ref{4.1.4}, for $p=1$, that 
$W_{\partial^\ast}(V^\ast)$ and $F^\ast$ are coprime, whence, since
$A_2^\ast$ and $F^\ast$ are coprime, $R_{F^\ast}(V^\ast)$ is finite by
Lemma~\ref{2.3.2}. As a consequence, $C^\ast$ is geometrically
reduced and $V^\ast$ is nondegenerate on each geometric component of
$C^\ast$ by Proposition~\ref{2.3.1}.

It follows as well from Theorem~\ref{4.1.4}, for $p=1$, that
\begin{equation}\label{1.3.1}
[\lim_{t \to 0}(W_{\partial^\ast}(V^\ast)\cdot F^\ast)] =
\sum_{e_i>1}e_i[W_{\partial_3(0)}(V(0))\cdot E_i] +
[W_{\partial(0)}(V(0))\cdot A_1] - \binom{r+1}{2}
\sum_{e_i>1}e_i[E_i \cdot A_1].
\end{equation}
Now, since $\partial_3(0)$ is equivalent to $(HA_1A_2/E_i)\partial_{E_i,F_1}$
modulo $E_i$ for each $i$, Lemma~\ref{2.3.2} implies that
\begin{equation}\label{1.3.2}
(W_{\partial_3(0)}(V(0))\cdot E_i) =\binom{r+1}{2}\Big( (HF_1\cdot E_i) +\sum_{j\neq
  i}(E_j\cdot E_i)\Big)+ R_{E_i}(V(0))
\end{equation}
for each $i$ with $e_i>1$. 
In the same way, since $\partial(0)$ is equivalent to
$(A_2F_0/E_i)\partial_{E_i,H}$ modulo $E_i$, we get
\begin{equation}\label{1.3.3}
(W_{\partial(0)}(V(0))\cdot E_i) = \binom{r+1}{2} (\frac{A_2F_0H}{E_i} \cdot E_i) + R_{E_i}(V(0))
\end{equation}
for each $i$ with $e_i=1$. Finally,
\begin{equation}\label{1.3.4}
(W_{\partial^\ast}(V^\ast)\cdot F^\ast) = R_{F^\ast}(V^\ast) +
\binom{r+1}{2} 
(A_2^\ast H^\ast \cdot F^\ast).
\end{equation}
So, taking the limit in Equation~\eqref{1.3.4} we get
\begin{equation}\label{1.3.5}
\lim_{t \to 0}(W_{\partial^\ast}(V^\ast)\cdot F^\ast) = \lim_{t \to
  0}R_{F^\ast}(V^\ast) +
\binom{r+1}{2}\Big( (A_2 \cdot F_1) + (H \cdot F_0)\Big)
\end{equation}
Thus, substituting \eqref{1.3.2}, \eqref{1.3.3} and \eqref{1.3.5} in
\eqref{1.3.1}, and taking associated $0$-cycles, the desired formula
follows.
\end{proof}

\begin{corollary}\label{5cor} Let $k$ be an algebraically closed field 
of characteristic zero, and $S:=k[X_0,X_1,X_2]$. 
Let $F(t):=\sum F_it^i \in S[[t]]$ be a 
homogeneous power series of positive degree with $F_0\neq 0$, and $C(t)$ the family of
plane curves it defines. Write
$$
F_0=\prod_{i=1}^mE_i^{e_i},
$$
where $E_1,...,E_m$ are the irreducible factors of $F_0$. Let $C_i$ be
the curve defined by $E_i=0$ for each $i$. Assume that
$\gcd(E_i,F_1)=1$ for each $i$ such that $e_i>1$. Then the generic
curve $C^\ast$ is geometrically reduced, and the limit 
of the dual plane curves of $C(t)$ satisfies:
$$
\lim_{t \to 0}(C^\ast)^\vee= \sum_ie_iC_i^\vee + \sum_{i < j}(e_i+e_j)[E_i\cdot E_j]^\vee + \sum_i(e_i-1)[E_i\cdot F_1]^\vee.
$$
\end{corollary}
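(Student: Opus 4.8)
The plan is to derive Corollary~\ref{5cor} as a near-immediate consequence of Theorem~\ref{5} by specializing the linear system $V(t)$ to the pencil of lines through a general point and translating the statement about ramification divisors into one about dual curves via Equation~\eqref{polar}. First I would fix a general point $R\in\mathbb P^2_k$ and let $V$ be the two-dimensional linear system cut out by lines through $R$, so that $V(t):=V[[t]]$ is a rank-$2$ saturated $k[[t]]$-submodule, i.e.\ $r=1$ and $\binom{r+1}{2}=1$. Because $R$ is general, $V$ consists of linear forms prime to $F_0$ and, again by generality of $R$, the system induced by $V(0)$ is nondegenerate on each component $C_i$: a pencil of lines restricts to a nondegenerate (base-point-free, for general $R$) pencil on any reduced curve not passing through $R$. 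Thus the hypotheses of Theorem~\ref{5} are met, and I obtain the limit $0$-cycle
\[
[R_F^0(V)]=\sum_i e_i[R_{E_i}(V(0))]+\sum_{i<j}(e_i+e_j)[E_i\cdot E_j]+\sum_i(e_i-1)[E_i\cdot F_1],
\]
where I have already substituted $\binom{r+1}{2}=1$.

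Next I would invoke the dictionary between ramification divisors of the pencil $V$ and the dual curve laid out in the introduction. On a single reduced curve $C_i$, the ramification scheme $R_{E_i}(V(0))$ is exactly the locus of points $P$ where the line $\overline{RP}$ is tangent to $C_i$, and by Equation~\eqref{polar} its image under the duality map $P\mapsto\overline{RP}^\vee$ is the intersection $C_i^\vee\cap R^\vee$. Intersecting the asserted identity for $\lim_{t\to0}(C^\ast)^\vee$ with the general line $R^\vee$ in the dual plane, the claim becomes the equality of $0$-cycles on $R^\vee$ obtained by applying the duality correspondence to $[R_F^0(V)]$ term by term. The singular-point contributions match because, for a point $P\in E_i\cap E_j$ or $P\in E_i\cap\{F_1=0\}$, the general polar through $R$ meets $C$ at $P$ with the stated multiplicity, so $[E_i\cdot E_j]$ maps to $[E_i\cdot E_j]^\vee\cap R^\vee$ and likewise for $[E_i\cdot F_1]$; these are precisely the lines dual to those singular points, each carrying the intersection multiplicity $n_P$ appearing in the adopted definition of $C^\vee$ from \cite{EH}.

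To finish I would argue that, since $R^\vee$ is a general line in the dual plane and the identity of the intersections $\bigl(\lim_{t\to0}(C^\ast)^\vee\bigr)\cap R^\vee$ holds for all general $R$, the two divisors $\lim_{t\to0}(C^\ast)^\vee$ and $\sum_i e_iC_i^\vee+\sum_{i<j}(e_i+e_j)[E_i\cdot E_j]^\vee+\sum_i(e_i-1)[E_i\cdot F_1]^\vee$ must coincide: a plane curve (here in the dual plane) is determined by its intersection with a general line together with the degree bookkeeping, and each $C_i^\vee$ contributes with multiplicity $e_i$ exactly because $R_{E_i}(V(0))$ occurs with coefficient $e_i$ in the ramification formula. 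The main obstacle I anticipate is not the computation but the bookkeeping of multiplicities in this translation: one must check carefully that the coefficient $e_i$ on $C_i^\vee$, rather than $1$, is the correct reading of $e_i[R_{E_i}(V(0))]$ under duality—that is, that a tangent line to $C_i$ appears $e_i$ times in the limit because $C_i$ is an $e_i$-fold component of $C(0)$—and that the singular-point terms $[E_i\cdot E_j]^\vee$ and $[E_i\cdot F_1]^\vee$ genuinely agree with the intersection-multiplicity definition of the dual curve of a singular curve in \cite{EH}. Verifying this compatibility, and confirming that no ramification mass escapes to infinity in the dual plane under the general choice of $R$, is the delicate point; the rest is a direct specialization of Theorem~\ref{5}.
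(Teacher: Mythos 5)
Your proposal is correct and follows exactly the paper's route: the paper's entire proof is ``Apply Theorem~\ref{5} for $V(t):=V[[t]]$, where $V$ is a general pencil of lines, and use \eqref{polar}.'' Your unpacking of the specialization $r=1$, $\binom{r+1}{2}=1$, the nondegeneracy of a general pencil on each component, and the term-by-term translation under \eqref{polar} and the dual-curve definition of \cite{EH} is a faithful (and more detailed) rendering of that same argument.
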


\begin{proof} Apply Theorem~\ref{5} for $V(t):=V[[t]]$, where $V$ is a
  general pencil of lines and use \eqref{polar}.
\end{proof}

\begin{remark}\label{3.3.4} If $\gcd(E_i,F_1)=1$ for each $i$ such that $e_i>1$, then $F(t)$ is \textit{regular} in Katz's terminology. In \cite{Ka}, 
Thm.~3, p.~103, Katz gives a formula for $\lim_{t \to 0}(C^\ast)^\vee$ under the
regularity assumption. Our formula looks different from Katz's; it is
actually just simpler to present, as our formula is a special case of his.
\end{remark}

\section{Zeuthen families}
\label{section:zeuthen}

Let $k$ be an algebraically closed field of characteristic zero and $S:=k[X_0,X_1,X_2]$. 

\begin{lemma}\label{5.1.1} 
Let $F(t):= E^2A + F_1t + F_2t^2 + \cdots \in S[[t]]$ 
be a homogeneous power series of positive degree, where $A$ and $E$
are square-free and coprime. Let $E:=\prod_jE_j$ be the decomposition
in irreducible factors. For each $E_j$, let 
$B_j:=E^2A/E_j^2$, let $\Delta_{1,j}:=F_1$, and put 
$$
\Delta_{n+2,j}:=B_j^{n+1}F_{n+2}-\sum_{\substack{i+r=n+2}}\frac{\Delta_{i,j}'}{2}\cdot \frac{\Delta_{r,j}'}{2}
$$
for each integer $n\geq 0$, where
$\Delta_{i,j}':=\Delta_{i,j}/E_j$ for all $i,j$. Then, for each $E_j$ and each integer $n\geq 0$,
\begin{align*}
B_j^{2n+1}F(t)\equiv&\Big(E_jB_j^{n+1} + (\Delta_{1,j}'B_j^nt)/2+ \cdots + (\Delta_{i,j}'B_j^{n+1-i}t^{i})/2 + \cdots +  (\Delta_{n+1,j}'t^{n+1})/2 \Big)^2\\
&+B_j^n\Delta_{n+2,j}t^{n+2}\mod t^{n+3}.
\end{align*}
\end{lemma}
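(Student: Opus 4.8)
The plan is to verify the congruence directly, by matching the coefficients of $t^0,\dots,t^{n+2}$ on the two sides. Throughout I fix $E_j$ and work in the localization $S[1/E_j]$ (equivalently in $\mathrm{Frac}(S)$), so that the divisions by $E_j$ hidden in the $\Delta_{i,j}'$, and by $2$ (legitimate since $\mathrm{char}\,k=0$), are allowed; the claim is then a congruence in $S[1/E_j][[t]]$ that follows once all coefficients up to degree $n+2$ agree. I write $\sigma_n(t):=E_jB_j^{n+1}+\sum_{i=1}^{n+1}\tfrac12\Delta_{i,j}'B_j^{n+1-i}t^i$ for the expression being squared, and I record the two elementary identities that drive everything: $E^2A=E_j^2B_j$ (from $B_j=E^2A/E_j^2$) and $E_j\Delta_{i,j}'=\Delta_{i,j}$.

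First I would expand $\sigma_n(t)^2$. Its constant term is $E_j^2B_j^{2n+2}$, matching the constant term $B_j^{2n+1}(E^2A)=E_j^2B_j^{2n+2}$ of $B_j^{2n+1}F(t)$. For $1\le s\le n+2$ the coefficient of $t^s$ in $\sigma_n(t)^2$ has exactly two sources: the cross terms with the leading term $E_jB_j^{n+1}$, which contribute $E_j\Delta_{s,j}'B_j^{2n+2-s}=\Delta_{s,j}B_j^{2n+2-s}$ when $s\le n+1$; and the products of two nonconstant terms, which contribute $B_j^{2n+2-s}\sum_{i+r=s}\tfrac14\Delta_{i,j}'\Delta_{r,j}'$. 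The key observation is that the uniform factor $B_j^{2n+2-s}$ pulls out of the whole coefficient of $t^s$.

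The recursion defining $\Delta_{s,j}$ is engineered precisely so that these contributions collapse onto the coefficients of $B_j^{2n+1}F(t)$. For $s=1$ the second source is empty and $\Delta_{1,j}=F_1$ gives $B_j^{2n+1}F_1$. For $2\le s\le n+1$, factoring out $B_j^{2n+2-s}$ leaves $\Delta_{s,j}+\sum_{i+r=s}\tfrac14\Delta_{i,j}'\Delta_{r,j}'$, which equals $B_j^{s-1}F_s$ by the defining recursion read with $n+2$ replaced by $s$; multiplying back by $B_j^{2n+2-s}$ produces $B_j^{2n+1}F_s$. The only index at which the first source is absent is $s=n+2$, and this is exactly where the extra summand $B_j^n\Delta_{n+2,j}t^{n+2}$ on the right-hand side enters: adding $B_j^n\Delta_{n+2,j}$ to the interior-product contribution $B_j^n\sum_{i+r=n+2}\tfrac14\Delta_{i,j}'\Delta_{r,j}'$ and invoking the definition of $\Delta_{n+2,j}$ gives $B_j^{2n+1}F_{n+2}$. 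Since every coefficient of $t^0,\dots,t^{n+2}$ agrees, the congruence modulo $t^{n+3}$ follows.

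No genuine induction on $n$ is required: the recursion for $\Delta_{s,j}$ is a definition that can be invoked termwise at each $s$. I expect the only real work to be bookkeeping in the expansion of $\sigma_n(t)^2$, namely keeping the uniform power $B_j^{2n+2-s}$ visible, cleanly separating the leading cross terms from the interior products, and correctly treating the boundary index $s=n+2$, where the correction term $B_j^n\Delta_{n+2,j}t^{n+2}$ absorbs the missing cross term. It is also worth stating at the outset that $E_j$ is invertible in the ambient ring and that, although the individual $\Delta_{i,j}'$ and $\Delta_{n+2,j}$ are a priori only rational, the displayed combinations have polynomial coefficients, consistent with the left-hand side lying in $S[[t]]$.
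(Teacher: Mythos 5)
Your proof is correct and is exactly the direct coefficient-by-coefficient verification that the paper intends (its own proof is just the phrase ``Simple verification''). The bookkeeping — the uniform factor $B_j^{2n+2-s}$, the split into leading cross terms versus interior products, and the boundary case $s=n+2$ absorbed by the $B_j^n\Delta_{n+2,j}t^{n+2}$ term — all checks out.
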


\begin{proof} Simple verification.
\end{proof}

\begin{definition}\label{5.1.2} 
We say that $F(t)$ is of \emph{type} $n$ for $E_j$ if $E_j$ divides 
$\Delta_{1,j},..., \Delta_{n-1,j}$ but does not divide $\Delta_{n,j}$.
\end{definition}

\begin{definition}\label{5.1.3} 
We call $\Delta_{i,j}$ the $i$-th \textit{discriminant} of $F(t)$
associated to $E_j$.
\end{definition}

\begin{remark}\label{5.1.4} 
When $E$ is irreducible, the family $C(t)$ given by $F(t)=0$ 
is a Zeuthen family of the first, second or 
third kind if and only if $F(t)$ is of type $1$, $2$ or $3$ for $E$, respectively,
cf.~\cite{vG}.  Also, if $F(t)$ is of type $1$, then $F(t)$ is a
special case of the $F(t)$ considered in Section \ref{example}.  
\end{remark}

\begin{theorem}\label{5.1.5} 
Let $k$ be an algebraically closed field of characteristic zero and
$S:=k[X_0,X_1,X_2]$.
Let $F(t):= E^2A + F_1t + \cdots \in S[[t]]$  be a homogeneous power series of positive
degree, where $A$ and $E$ are square-free and coprime, and $C(t)$ the
family of plane curves it defines.  
Let $E=E_1\cdots E_m$ be the decomposition in irreducible
factors. Assume the generic curve $C^\ast$ is geometrically reduced. 
Then for each $E_j$ there is an integer $n_j$ such that $F(t)$ is of
type $n_j$ for $E_j$. Furthermore, let 
$V(t) \subset S[[t]]$ be a saturated, homogeneous
$k[[t]]$-submodule of rank $r+1$, for some integer $r\geq 0$. Assume
that $V(0)$ is nondegenerate on each component of $C(0)$. Then 
the generic ramification scheme $R_{F^\ast}(V^\ast)$ is finite 
and the limit $0$-cycle $[R_F^0(V)]$ satisfies
\begin{align*}
[R_F^0(V)] =& 2\sum_{j=1}^m[R_{E_j}(V(0))] + [R_A(V(0))] + \binom{r+1}{2}[E^2\cdot A]\\
&+\sum_{j=1}^m\binom{r+1}{2}[\Delta_{n_j,j}\cdot E_j] 
- \sum_{j=1}^m\binom{r+1}{2}(n_j-2)[B_j\cdot E_j],
\end{align*}
with the $\Delta_{i,j}$ and the $B_j$ as defined in Lemma~\ref{5.1.1}.
\end{theorem}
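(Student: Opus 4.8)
The plan is to deduce everything from Theorem~\ref{4.1.4}, applied to the global $F(t)$-derivation $\partial(t):=\partial_{F(t),H}$, where $H\in S$ is homogeneous, nonconstant and prime to $F_0$. The irreducible factors of $F_0=E^2A$ come in two flavours: the factors of $A$, each with multiplicity $1$, and the factors $E_j$ of $E$, each with multiplicity $2$. So I must exhibit, for every such factor, an adapted $F(t)$-derivation together with the comparison data $H_i,K_i$ and the integer $p$ required by the theorem. First, though, I would settle the existence of the types $n_j$: if $E_j$ divided every discriminant $\Delta_{i,j}$, then Lemma~\ref{5.1.1} would express $B_j^{2n+1}F(t)$, for all $n$, as a square modulo $t^{n+3}$ plus a term $B_j^n\Delta_{n+2,j}t^{n+2}$ lying in the ideal $(E_j)$; localizing at the generic point of $E_j=0$ this forces $F^\ast$ to be a unit times a perfect square, so $C^\ast$ would carry a multiple component and fail to be geometrically reduced, against the hypothesis. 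Hence each $E_j$ admits a type $n_j$ in the sense of Definition~\ref{5.1.2}.

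The adaptations themselves are the heart of the matter. For a simple factor of $A$ the derivation $\partial(t)=\partial_{F(t),H}$ is already adapted, exactly as in the proof of Theorem~\ref{5}, and contributes the term $[R_A(V(0))]$ after expansion. For a factor $E_j$ I would use the degree-$(n_j-1)$ \emph{approximate square root}
$$
P_j(t):=E_jB_j^{n_j-1}+\tfrac{\Delta_{1,j}'}{2}B_j^{n_j-2}t+\cdots+\tfrac{\Delta_{n_j-1,j}'}{2}t^{n_j-1}
$$
furnished by Lemma~\ref{5.1.1}, so that $B_j^{2n_j-3}F(t)\equiv P_j(t)^2\bmod t^{n_j}$, the first nonzero correction being $B_j^{n_j-2}\Delta_{n_j,j}t^{n_j}$. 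Since $\partial_{F(t),P_j(t)}$ kills $F(t)$ it is an $F(t)$-derivation, and Lemma~\ref{2.3.2}(1) gives $H\,\partial_{F(t),P_j(t)}\equiv_{F^\ast}P_j(t)\,\partial(t)$, which identifies the comparison factor. A direct computation yields $\partial_{F_0,P_j(0)}=(2n_j-3)E_j^2B_j^{n_j-1}\partial_{E_j,B_j}$, so $\partial_{F(t),P_j(t)}$ is divisible by $E_j^2$ at $t=0$; the $E_j$-adapted derivation $\partial_j(t)$ is its reduction, and it is genuinely adapted \emph{because} $F(t)$ has type $n_j$. Indeed $\gcd(\partial_j(0),E_j)=1$ comes from Lemma~\ref{2.3.2}(3) together with the coprimality of $B_j$ and $E_j$, and holds precisely when $E_j\nmid\Delta_{n_j,j}$; the same coprimality is what makes $K_i(0)$ prime to $E_i$, verifying hypothesis (3) of Theorem~\ref{4.1.4}.

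Finally I would feed these data into Theorem~\ref{4.1.4} for the appropriate $p$ (dictated by the square in Lemma~\ref{5.1.1}, so that the denominators $2$ introduced by the square root are cleared), obtaining $[\lim_{t\to0}(W_{\partial^\ast}(V^\ast)\cdot F^\ast)]$; coprimality of $W_{\partial^\ast}(V^\ast)$ with $F^\ast$ together with Lemma~\ref{2.3.2} then gives finiteness of $R_{F^\ast}(V^\ast)$ and, through Proposition~\ref{2.3.1}, the asserted geometric reducedness and nondegeneracy. To isolate $[R_F^0(V)]$ I subtract $\binom{r+1}{2}[\lim_{t\to0}(H^\ast\cdot F^\ast)]=\binom{r+1}{2}[H\cdot F_0]$ as in Lemma~\ref{2.3.2}, expand each $[W_{\partial_i(0)}(V(0))\cdot E_i]$ via Lemma~\ref{2.3.2} into $[R_{E_i}(V(0))]$ plus intersection corrections, and collect; the factor $2$ in front of $\sum_j[R_{E_j}(V(0))]$ is the multiplicity $e_j=2$, while the integers $n_j-2$ and the intersections $[\Delta_{n_j,j}\cdot E_j]$, $[B_j\cdot E_j]$ and $[E^2\cdot A]$ fall out of the bookkeeping of the $t$-order $t^{n_j}$ and the $B_j$-powers.

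The main obstacle is exactly this construction and reduction of the $E_j$-adaptation: producing an honest $F(t)$-derivation out of $P_j(t)$, computing its reduction to something prime to $E_j$, and checking the projective-equivalence hypotheses (1)–(3) of Theorem~\ref{4.1.4} with the correct $p$ and $K_i$. All the subtlety of the higher Zeuthen kinds (type $n_j>1$) is concentrated here, and as a final consistency point the auxiliary polynomial $H$ must be seen to drop out of the resulting $0$-cycle.
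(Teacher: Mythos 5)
Your overall strategy coincides with the paper's: the types $n_j$ exist because otherwise Lemma~\ref{5.1.1} would exhibit $F^\ast$ as a unit times a square at the generic point of $E_j=0$; the global derivation is $\partial(t)=\partial_{F(t),H}$; the adaptations come from the approximate square roots; and everything is fed into Theorem~\ref{4.1.4} with $p=2$. But there is a genuine gap at the central step, the construction of the $E_j$-adapted derivation. You propose to take $\partial_{F(t),P_j(t)}$ and pass to ``its reduction'', justified by the identity $\partial_{F_0,P_j(0)}=(2n_j-3)E_j^2B_j^{n_j-1}\partial_{E_j,B_j}$. That divisibility by $E_j^2$ holds only for the constant term of the family: for $t\neq 0$ the derivation $\partial_{F(t),P_j(t)}$ is not divisible by $E_j$, so there is no common polynomial factor to cancel in the sense of the reduced derivations of Subsection~\ref{3.3}. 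Worse, the value at $t=0$ of your derivation involves only $E_j$ and $B_j$, so no cancellation of factors in $S$ can produce a derivation whose adaptedness is governed by $E_j\nmid\Delta_{n_j,j}$: the discriminant lives in the higher-order terms in $t$, and the ``reduction'' you need is a division by a power of $t$, not by $E_j^2$.

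What actually works, and is what the paper does, is to use the \emph{exact} identity $B_j^{2n_j-3}F(t)=Q_{1,j}(t)^2+t^{n_j}Q_{2,j}(t)$ with $Q_{1,j}=P_j$ and $Q_{2,j}:=(B_j^{2n_j-3}F(t)-P_j(t)^2)/t^{n_j}$, and to set $\partial'_j(t):=B_j^{2n_j-3}\partial_{Q_{2,j},Q_{1,j}}$. This annihilates $B_j^{2n_j-3}F(t)$, hence is an $F(t)$-derivation, and it is adapted to $E_j$ because $Q_{2,j}(0)=B_j^{n_j-2}\Delta_{n_j,j}$ is prime to $E_j$ exactly by Definition~\ref{5.1.2}. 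The precise relation to your derivation is $B_j^{2n_j-3}\partial_{F(t),P_j(t)}\equiv t^{n_j}\partial_{Q_{2,j},Q_{1,j}}$ modulo $F(t)$, so the missing ``reduction'' is: multiply by $B_j^{2n_j-3}$, correct by a multiple of $F(t)$, and divide by $t^{n_j}$. Two further points. First, the case $n_j=1$ needs separate treatment, since $B_j^{2n_j-3}=B_j^{-1}$ is not a polynomial; the paper uses $B_jF(t)=(E_jB_j)^2+tB_jD(t)$ and $\partial'_j=B_j\partial_{B_jD(t),E_jB_j}$ there. Second, $p=2$ in Theorem~\ref{4.1.4} is forced not by ``clearing the denominators $2$'' (harmless in characteristic zero) but by the fact that the comparison factor $H_j(0)=P_j(0)=E_jB_j^{n_j-1}$ is itself divisible by $E_j$, so hypothesis (3) fails for $p=1$; only its square is projectively equivalent modulo $F^\ast$ to (a unit times) $Q_{2,j}^\ast$, whose constant term is prime to $E_j$.
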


\begin{proof} If $F(t)$ were not of type $n$ for $E_j$ for any $n>0$, then, by Lemma~\ref{5.1.1}, we would have 
$$
F(t)=\frac{\Big(E_jB_j + \sum_{i=1}^{\infty}(1/2)\Delta_{i,j}'B_j^{1-i}t^i\Big)^2} {B_j},
$$
and thus on the open set $B_j \neq 0$ the generic fiber would not be
reduced. Thus $F(t)$ is of type $n_j$ for each $E_j$ for a certain
$n_j$. 

There are now two cases to consider:

\textbullet\ \underline{$n_j=1$:} First notice that
$B_jF(t)=(E_jB_j)^2 + tB_jD(t)$, 
where $D(t):=(F(t)-E^2A)/t$. Let 
$$
\partial'_j(t):=B_j\partial_{B_jD(t), E_jB_j}.
$$
Then $\partial'_j(t)$ is a $F(t)$-derivation and is adapted to $E_j$ by Lemma~\ref{2.3.2}.

\textbullet\ \underline{$n_j \geq 2$:} Set $m_j:=n_j-2$. By Lemma~\ref{5.1.1} we have that
\begin{align*}
B_j^{2m_j+1}F(t)\equiv &\Big(E_jB_j^{m_j+1} + (\Delta_{1,j}'B_j^{m_j}t)/2+\cdots+(\Delta_{m_j+1,j}'t^{m_j+1})/2 \Big)^2\\
&+ B_j^{m_j}\Delta_{m_j+2,j}t^{m_j+2} \mod t^{m_j+3}.
\end{align*}
Define
\begin{align*}
Q_{1,j}(t):=& E_jB_j^{m_j+1} + (\Delta_{1,j}'B_j^{m_j}t)/2+\cdots +  (\Delta_{m_j+1,j}'t^{m_j+1})/2,\\
Q_{2,j}(t):=& (B_j^{2m_j+1}F(t) -Q_{1,j}(t)^2)/t^{m_j+2}.
\end{align*}
Since $F(t)$ is of type $n_j$ for $E_j$, we have $Q_{1,j}(t),Q_{2,j}(t)\in
S[[t]]$ and $E_j\nmid Q_{2,j}(0)$. Let
$$
\partial'_j(t):=B_j^{2m_j+1}\partial_{Q_{2,j}(t),Q_{1,j}(t)}.
$$
Then $\partial'_j(t)$ is a $F(t)$-derivation and is adapted to $E_j$ by Lemma~\ref{2.3.2}.

Let $H \in S$ be homogeneous and prime to $E^2A$. It follows from Lemma~\ref{2.3.2} that:
$$
H(\partial'_j)^\ast\equiv_{F^\ast}
\begin{cases}
B_j^3E_j\partial_{F^\ast, H^\ast}&\text{if }n_j=1,\\
B_j^{2(2m_j+1)}Q_{1,j}^\ast\partial_{F^\ast,H^\ast}
&\text{if }n_j\geq 2
\end{cases}
$$
as $k((t))$-derivations of $S((t))$.

Define
$$
\partial(t):=\partial_{F(t),H};
$$
$$
\partial_j(t):= H \partial'_j(t), \quad  H_j:=B_j^3E_j \quad  \textrm{ and } \quad  K_j(t):=B_j^5D(t), \quad  \textrm{ if } n_j=1;
$$
\[
\partial_j(t):= H \partial'_j(t),\quad H_j(t):=B_j^{2(2m_j+1)}Q_{1,j}(t) \quad \textrm{ and }\quad K_j(t):=B_j^{4(2m_j+1)}Q_{2,j}(t), \quad \textrm{ if } n_j\geq 2.
\]
The data $\partial_j(t)$, $H_j(t)$ and $K_j(t)$ satisfy all the
conditions of Theorem~\ref{4.1.4} for $p=2$. Thus
$W_{\partial^\ast}(V^\ast)$ and $F^\ast$ are coprime, and hence 
$R_{F^\ast}(V^\ast)$ is finite by Lemma~\ref{2.3.2}. Furthermore,
\begin{equation}
\label{5.1.5.1}
\begin{split}
[\lim_{t \to 0}(W_{\partial^\ast}(V^\ast)\cdot F^\ast)]=& 2\sum_{j=1}^m [W_{\partial_j(0)}(V(0))\cdot E_j] + \ [W_{\partial(0)}(V(0))\cdot A]\\
&- \binom{r+1}{2}\sum_{n_j=1}[B_j^5\Delta_{1,j}\cdot E_j] \\
&- \binom{r+1}{2}\sum_{n_j\geq 2}[ B_j^{4(2m_j+1)}B_j^{m_j}\Delta_{n_j,j}\cdot E_j ].
\end{split}
\end{equation}

We will now consider each term of \eqref{5.1.5.1}. From
Lemma~\ref{2.3.2}, since $\partial_j(0)=H\partial'_j(0)$, we have 
\begin{equation}\label{5.1.5.2}
[W_{\partial_j(0)}(V(0))\cdot E_j] =
\binom{r+1}{2}[HB_j^3\Delta_{1,j}\cdot E_j] + [R_{E_j}(V(0))]
\end{equation}
if $n_j=1$, whereas
\begin{equation}\label{5.1.5.3}  
[W_{\partial_j(0)}(V(0))\cdot E_j] = 
\binom{r+1}{2}[HB_j^{4m_j+2}\Delta_{n_j,j}\cdot E_j] +
[R_{E_j}(V(0))]
\end{equation}
if $n_j\geq 2$ (recall that $m_j=n_j-2$).
Also, since $\partial(0)=\partial_{E^2A,H}\equiv_A E^2\partial_{A,H}$ as $k$-derivations of $S$,
\begin{equation}\label{5.1.5.4}  
[W_{\partial(0)}(V(0))\cdot A] = \binom{r+1}{2}[E^2H\cdot A] + [R_A(V(0))].
\end{equation}

Finally,
\begin{equation}\label{5.1.5.5} 
(W_{\partial^\ast}(V^\ast)\cdot F^\ast) = R_{F^\ast}(V^\ast) + \binom{r+1}{2} (H^\ast \cdot F^\ast).
\end{equation}
So, taking limit $0$-cycles in \eqref{5.1.5.5} we get
\begin{equation}\label{5.1.5.6} 
[\lim_{t \mapsto 0}(W_\partial(V^\ast)\cdot F^\ast)] = [R_{F}^0(V)] + \binom{r+1}{2}[H \cdot AE^2].
\end{equation}
Thus, substituting \eqref{5.1.5.2}, \eqref{5.1.5.3}, \eqref{5.1.5.4}
and \eqref{5.1.5.6} into Equation~\eqref{5.1.5.1},
the stated formula for $[R^0_F(V)]$ follows. \end{proof}

\begin{corollary}\label{5.1.6} 
Let $k$ be an algebraically closed field of characteristic zero and
$S:=k[X_0,X_1,X_2]$.
Let $F(t):= E^2A + F_1t +
F_2t^2 + \cdots \in S[[t]]$ be a homogeneous power series of positive
degree, where $A$ and $E$ are square-free and coprime, and $C(t)$ be
the family of plane curves it defines. Let
$E=E_1\cdots E_m$ be the decomposition in irreducible factors.  Let $C_j$ be
the curve given by $E_j=0$ for each $j$, and $C_A$ that given by
$A=0$. If the generic curve $C^\ast$ is geometrically reduced, then the limit of the 
dual curves of the family $C(t)$ satisfies:
$$
\lim_{t \to 0}(C^\ast)^\vee =2\sum_{j=1}^mC_j^\vee + C_A^\vee + 2[E\cdot A]^\vee + \sum_{j=1}^m[\Delta_{n_j,j}\cdot E_j]^\vee- \sum_{j=1}^m(n_j-2)[B_j\cdot E_j]^\vee,
$$
where $B_j:=E^2A/E_j^2$, where $\Delta_{n_j,j}$ is the $n_j$-th
discriminant of $F(t)$ associated to $E_j$ and $n_j$ is the type of $F(t)$
for $E_j$, for each $j=1,\dots,m$.
\end{corollary}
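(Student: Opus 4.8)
The plan is to follow exactly the template of Corollary~\ref{5cor}: apply Theorem~\ref{5.1.5} to the family $V(t):=V[[t]]$, where $V$ is a general pencil of lines, and then convert the resulting formula for the limit ramification cycle into one for the limit of the dual curves by means of Equation~\eqref{polar}.

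First I would let $V$ be the pencil of lines through a general point $R\in\mathbb P^2_k$, so that $V(t)=V[[t]]$ has rank $r+1=2$ and thus $\binom{r+1}{2}=1$. Since every component of $C(0)$ has positive degree, a general pencil restricts nondegenerately to each of them, so $V(0)$ is nondegenerate on each component of $C(0)$; together with the assumed geometric reducedness of $C^\ast$, this verifies the hypotheses of Theorem~\ref{5.1.5}. The theorem then gives the finiteness of $R_{F^\ast}(V^\ast)$ and, after setting $\binom{r+1}{2}=1$, the identity
$$[R_F^0(V)] = 2\sum_{j=1}^m[R_{E_j}(V(0))] + [R_A(V(0))] + [E^2\cdot A] + \sum_{j=1}^m[\Delta_{n_j,j}\cdot E_j] - \sum_{j=1}^m(n_j-2)[B_j\cdot E_j].$$

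Next I would use the duality dictionary recorded around Equation~\eqref{polar}. For the geometrically reduced generic curve $C^\ast$ one has $(C^\ast)^\vee\cap R^\vee=\pi_*R_{F^\ast}(V^\ast)$, where $\pi$ sends a point $P$ of the plane to the point $\overline{RP}^\vee$ of the dual plane. As explained in the Introduction, for general $R$ the limit dual curve is determined by its intersection with $R^\vee$, so intersection with $R^\vee$ commutes with passing to the limit and $\big(\lim_{t\to0}(C^\ast)^\vee\big)\cap R^\vee=\pi_*[R_F^0(V)]$. It then remains to read each summand through $\pi_*$. A ramification point lying on the smooth component $C_j$ (resp.\ $C_A$) is sent by $\pi$ to a point of $C_j^\vee\cap R^\vee$ (resp.\ $C_A^\vee\cap R^\vee$), so the cycles $2\sum_j[R_{E_j}(V(0))]$ and $[R_A(V(0))]$ account for $2\sum_j C_j^\vee$ and $C_A^\vee$. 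For any of the remaining points $P$, which are singular points of $C(0)$ or points of the discriminant intersections, one has $\pi(P)=\overline{RP}^\vee=P^\vee\cap R^\vee$; hence each such $0$-cycle $[Z\cdot E_j]$ contributes the dual line $[Z\cdot E_j]^\vee$, while $[E^2\cdot A]=2[E\cdot A]$ contributes $2[E\cdot A]^\vee$. Matching all the summands yields the stated formula.

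The one step I expect to require care is the commutation of the limit with intersection by $R^\vee$, namely $\lim_{t\to0}\big((C^\ast)^\vee\cap R^\vee\big)=\big(\lim_{t\to0}(C^\ast)^\vee\big)\cap R^\vee$ for general $R$. This is precisely the organizing principle of the paper, that the limit dual curve is recovered from its intersection with a general line of the dual plane; granting it, the remainder of the argument is the bookkeeping through $\pi_*$ and the specialization $\binom{r+1}{2}=1$ carried out above.
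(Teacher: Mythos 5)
Your proposal is correct and follows exactly the paper's own (very terse) proof: apply Theorem~\ref{5.1.5} to $V(t):=V[[t]]$ for a general pencil of lines, so that $\binom{r+1}{2}=1$ and $[E^2\cdot A]=2[E\cdot A]$, and translate the limit ramification cycle into the intersection of the limit dual curve with a general line via Equation~\eqref{polar}. The extra bookkeeping you supply (nondegeneracy of a general pencil on each component, the pushforward $\pi_*$, and the commutation of the limit with intersection by $R^\vee$) is exactly the content the paper leaves implicit.
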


\begin{proof} Apply Theorem~\ref{5.1.5} for $V(t):=V[[t]]$, where $V$ is a
  general pencil of lines, and use \eqref{polar}.
\end{proof}

\section{Acknowledgments} 
We are grateful to Jorge Vit\'orio Pereira and Israel Vainsencher for many 
discussions on the subject. We are specially grateful to
Steven Kleiman for insights, for pointing out
many references and for his comments on an earlier draft
of this paper. The first and second author are grateful to MIT for 
its hospitality during a visit of theirs,
when the seeds of this work were sown. The third author would like to
thank IMPA for its hospitality 
while this work was being finished.

\vspace{1cm}

\textsc{Instituto Nacional de Matem\'{a}tica Pura e Aplicada, Estrada
  Dona Castorina 110, 22460-320 Rio de Janeiro RJ, Brazil}

\textit{E-mail address:} \texttt{esteves@impa.br}

\vspace{0.2cm}
\textsc{Universidade Federal Fluminense, Instituto de Matem\'{a}tica e
  Estat\'{\i}stica, Rua Professor Marcos Waldemar de Freitas Reis,
  s/n, Campus Gragoat\'{a}, 24210-201 Niter\'{o}i RJ, Brazil}

\textit{E-mail address:} \texttt{nivaldomedeiros@id.uff.br}

\vspace{0.2cm}

\textsc{Universidade Federal da Paraíba, Centro de Ciências Exatas e da Natureza, Departamento de Matemática, Campus Universitário, 58051-900 João Pessoa PB, Brazil}

\textit{E-mail address:} \texttt{wallace@mat.ufpb.br}

\end{document}